%-----------------------------------------------------------------------
% Beginning of proc-l-template.tex
%-----------------------------------------------------------------------
%
%     This is a topmatter template file for PROC for use with AMS-LaTeX.
%
%     Templates for various common text, math and figure elements are
%     given following the \end{document} line.
%
%%%%%%%%%%%%%%%%%%%%%%%%%%%%%%%%%%%%%%%%%%%%%%%%%%%%%%%%%%%%%%%%%%%%%%%%

%     Remove any commented or uncommented macros you do not use.

\documentclass{proc-l}

\usepackage{amssymb}

\usepackage{graphicx}

\usepackage{epstopdf}

\usepackage{stmaryrd}

\usepackage[utf8]{inputenc}

%     Update the information and uncomment if AMS is not the copyright
%     holder.
%\copyrightinfo{2009}{American Mathematical Society}

\newtheorem{theorem}{Theorem}[section]
\newtheorem{lemma}[theorem]{Lemma}
\newtheorem{proposition}[theorem]{Proposition}
\newtheorem{corollary}[theorem]{Corollary}

\theoremstyle{definition}
\newtheorem{definition}[theorem]{Definition}

\theoremstyle{remark}

\numberwithin{equation}{section}

\newcommand{\Ho}{\mathcal{H}}

\newcommand{\R}{\mathbb{R}}

\newcommand{\Z}{\mathbb{Z}}
\newcommand{\N}{\mathbb{N}}

\newcommand{\Cf}{\mathcal{C}}
\newcommand{\Ff}{\mathcal{F}}
\newcommand{\Hf}{\mathcal{H}}

\newcommand{\Uf}{\mathcal{U}}

\newcommand{\Df}{\mathcal{D}}

\begin{document}

% \title[short text for running head]{full title}
\title[Generic shadowing property for conservative homeomorphisms]{On the genericity of the shadowing property for conservative homeomorphisms}

%    Only \author and \address are required; other information is
%    optional.  Remove any unused author tags.

%    author one information
% \author[short version for running head]{name for top of paper}
\author{Pierre-Antoine Guihéneuf}
\address{Universidade Federal Fluminense, Rua Mário Santos Braga, 24020-140, Niterói, RJ, Brasil}
\curraddr{}
\email{pguiheneuf@id.uff.br}
\thanks{}

\author{Thibault Lefeuvre}
\address{\'Ecole Polytechnique, Route de Saclay, 91128 Palaiseau, France}
\curraddr{}
\email{thibault.lefeuvre@polytechnique.org}
\thanks{}

\date{}

\dedicatory{}

%    Abstract is required.
\begin{abstract}
We prove the genericity of the shadowing and periodic shadowing properties for both conservative and dissipative homeomorphisms on a compact connected manifold. Our proof is valid for topological manifolds and still holds in the dissipative case. As a consequence of this result, we establish the genericity of the specification property, the average shadowing property and the asymptotic average shadowing property in the conservative case.
\end{abstract}

\maketitle

%    Text of article.

\section{Introduction}

Most of practical dynamical systems are very complex and subject to exterior perturbations, thus limiting their modelling to a relatively rough approximation. Therefore, one can wonder if the small discrepancy between the real system and its model has big consequences from a dynamical viewpoint? It turns out that for dynamical systems possessing the \emph{shadowing property} (see Definition~\ref{def:shadow}), the errors induced by the model do not destroy completely the dynamical behaviour: any orbit of the model is in fact close to some real orbit. In particular, this remark is still valid in the case where the ``real system'' is some abstract dynamics, and the model is a numerical simulation of it. The goal of this paper is to prove that ``most of'' the dynamical systems satisfy this shadowing property: both generic conservative and dissipative homeomorphisms of compact manifolds possess this property (see definitions below). Thus, most of the time, in some quite weak sense, both approximated models and numerical simulations are dynamically relevant.

\subsection{General set-up}

Throughout this paper, we will consider $M$, a compact connected manifold with or without boundary, of dimension $n \geq 2$, endowed with a distance $\text{dist}$. A \textit{good}\footnote{Also called OU (Oxtoby-Ulam) measure or Lebesgue-like measure in literature.} Borel probability measure $\mu$ on $M$ is a measure satisfying:
\begin{enumerate}
\item  $\forall p \in M, \mu(\left\{ p \right\} ) = 0$ (non-atomic);
\item  $\forall \Omega \subset M$ non-empty open set, $\mu(\Omega) > 0$ (full support);
\item  $\mu(\partial M) = 0$ (zero on the boundary).
\end{enumerate}
Once for all, we fix such a measure $\mu$. We denote by $\mathcal{H}(M)$ the set of homeomorphisms of $M$ and by $\mathcal{H}(M,\mu)$ the set of conservative homeomorphisms of $M$, namely the homeomorphisms on $M$ preserving the measure $\mu$. In the sequel, $\Ho$ will denote both spaces $\mathcal{H}(M)$ and $\mathcal{H}(M,\mu)$. The spaces $\Ho$ are metrizable by the $\mathcal{C}^0$ distance $d(f,g) = \max_{x \in M} \text{dist} (f(x),g(x))$, for $f,g \in \Ho$, or by the uniform distance for homeomorphisms $\delta(f,g) = d(f,g) + d(f^{-1},g^{-1})$. Only the latter is complete, but one can easily check that they span the same topology.

We call $G_{\delta}$ any countable intersection of open subsets of $\Ho$. Since $\Ho$ is a complete metric space, Baire's theorem states that a countable intersection of dense open sets is, in particular, a dense $G_{\delta}$ set. We call \textit{residual} a dense $G_{\delta}$ set, and say that a property is \textit{generic} in $\Ho$ if it is satisfied on at least a residual set. This notion has a nice behaviour under intersection: given a finite (or countable) number of generic properties, the set of homeomorphisms satisfying simultaneously all these properties is still a countable intersection of dense open sets, and is therefore dense. As a consequence, this allows to talk about generic homeomorphisms and list their different properties.

\subsection{The shadowing property}

\begin{definition}
Given $f \in \Ho$ and $\delta > 0$, a \emph{$\delta$-pseudo orbit} $(x_k)_{k \in \Z}$, is a sequence of points in $M$ such that $\text{dist} (f(x_k), x_{k+1}) < \delta$, for all $k \in \Z$. A \emph{$\delta$-periodic pseudo orbit} is a $\delta$-pseudo orbit such that there exists an integer $N > 0$ such that $x_{k+N}=x_k$, for all $k \in \Z$.
\end{definition}

It is natural to think of a $\delta$-pseudo orbit in terms of a roundoff error that a computer would generate when trying to compute the iterations of the point $x_0$ under the transformation $f$.

\begin{definition}[Shadowing property]\label{def:shadow}
We say that $f \in \Ho$ satisfies the:
\begin{itemize}
\item \textit{shadowing property}, if for every $\varepsilon > 0$, there exists $\delta > 0$, such that any $\delta$-pseudo orbit $(x_k)_{k \in \Z}$ is $\varepsilon$-shadowed by the real orbit of a point, namely, there exists $x^* \in M$ such that $\text{dist}(f^k(x^*),x_k) < \varepsilon$, for all $k \in \Z$;
\item \textit{periodic shadowing property}, if for every $\varepsilon > 0$, there exists a $\delta > 0$, such that any $\delta$-periodic pseudo orbit $(x_k)_{k \in \Z}$ is $\varepsilon$-shadowed by the real orbit of a periodic point $x^*$, with same period as $(x_k)_{k \in \Z}$;	
\item \textit{special shadowing property}, if it satisfies both shadowing and periodic shadowing properties.
\end{itemize}
\end{definition}

In other words, if $f$ satisfies the shadowing property, then any pseudo orbit stays close enough to the real orbit of some point $x^*$ of $M$, \textit{a priori} different from $x_0$. \\

\subsection{Main result}

As far as conservative homeomorphisms are concerned, their generic properties are now quite well understood, thanks to some fundamental results obtained in the 1940's by J. Oxtoby and S. Ulam, in the 1960's by A. Katok and A. Stepin and in the 1970's by S. Alpern, V. S. Prasad and P. Lax. In \cite{pa}, the first author provides a general overview of the generic properties of conservative homeomorphisms, be they of topological or ergodic nature (see also the historical survey \cite{ChoP} of  J. Choksi and V. Prasad).

However, it seems that the genericity of the shadowing property is still missing in the conservative case. This paper aims to fill in this gap by showing the following result:

\begin{theorem}
\label{th:shadowing}
The special shadowing property is generic in $\Ho$.
\end{theorem}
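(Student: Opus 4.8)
The plan is to prove Theorem~\ref{th:shadowing} by an explicit approximation argument, reducing everything to a dense class of homeomorphisms for which shadowing is ``built in''. First I would reformulate the conclusion in the standard way: for each $\varepsilon > 0$, the set
\[
\Uf_\varepsilon = \{ f \in \Ho : \exists\, \delta > 0 \text{ such that every } \delta\text{-pseudo orbit is } \varepsilon\text{-shadowed, and likewise for periodic ones}\}
\]
is open in $\Ho$ (for the $\delta$ topology), and the special shadowing property is exactly $\bigcap_{n \geq 1} \Uf_{1/n}$. Openness is essentially formal: if $f \in \Uf_\varepsilon$ witnessed by $\delta$, then any $g$ with $d(f,g) < \delta/3$ has the property that every $(\delta/3)$-pseudo orbit of $g$ is a $\delta$-pseudo orbit of $f$, hence $(\varepsilon/2)$-shadowed by an $f$-orbit; a further small-$C^0$-perturbation/continuity argument upgrades this to shadowing by a $g$-orbit at scale $\varepsilon$ (one must be slightly careful here — this is where uniform continuity of $f$ and a compactness argument on the shadowing point enter). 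So the crux is \textbf{density} of $\Uf_{1/n}$ in $\Ho$ for each fixed $n$.

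To prove density I would use a combinatorial/topological model of the dynamics. Given $f \in \Ho$ and $\varepsilon>0$, choose a fine finite ``grid'': a cover of $M$ by small cells $(C_i)_{i\in I}$ of diameter $<\varepsilon/10$, say coming from a triangulation or a cubulation adapted to the manifold, with $\mu(\partial C_i)=0$. Build the combinatorial transition graph $\Gamma$ whose vertices are the cells and with an edge $C_i \to C_j$ whenever $f(C_i) \cap C_j \neq \emptyset$ (equivalently, use a $\gamma$-thickened version for a small auxiliary $\gamma$). The key point is that $f$-pseudo orbits at a sufficiently small scale $\delta$ project to walks in $\Gamma$, and conversely walks in $\Gamma$ can be realized by genuine orbits of a well-chosen perturbation $g$ of $f$. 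Concretely, I would construct $g$, $C^0$-close to $f$ (and $\mu$-preserving in the conservative case), such that for every cell $C_i$ one has the Markov-type inclusion: the image $g(C_i)$ contains a full ``plaque'' crossing each cell $C_j$ with $C_i \to C_j$ in $\Gamma$. With such a $g$, any walk in $\Gamma$ is exactly shadowed by a true $g$-orbit (by a nested-intersection / topological horseshoe argument on the cells), and a periodic walk is shadowed by a periodic point of the same period by a Brouwer-type fixed point argument applied to the composition of the plaque maps along the loop. This handles both shadowing and periodic shadowing simultaneously, giving $g \in \Uf_{\varepsilon}$ with $d(f,g)$ small.

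The construction of $g$ is the heart of the argument, and it is also the main obstacle, for two reasons. First, one must make the perturbation local — supported near the boundaries of cells and in tubes joining adjacent cells — so that it stays $C^0$-small while creating the required connections; this is a careful but standard ``fragmentation + local surgery'' exercise using that $M$ is a manifold (charts reduce everything to a cube, where one can explicitly push points along prescribed tubes). Second, and more delicate, is the conservative constraint: the perturbation must preserve $\mu$. Here I would invoke the classical toolbox for OU measures — Oxtoby--Ulam's theorem and the Alpern--Prasad homeomorphic measure transfer results referenced via \cite{pa} — to transport the explicit (non-measure-preserving) model perturbation to a measure-preserving one without losing $C^0$-closeness, or alternatively to first conjugate $\mu$ to Lebesgue measure on cubes and perform the surgery with area-preserving maps there. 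I expect the dissipative case to follow a fortiori (dropping the measure constraint only makes the surgery easier), so the paper can treat $\Ho = \Hf(M)$ and $\Ho = \Hf(M,\mu)$ in parallel, the only genuinely new ingredient over the dissipative literature being the measure-preservation of the local surgeries. Once density of each $\Uf_{1/n}$ is established, Baire's theorem gives that $\bigcap_n \Uf_{1/n}$ is residual, which is precisely the assertion of the theorem.
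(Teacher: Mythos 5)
Your density argument is essentially the paper's strategy in spirit: you build a transition graph on a fine grid, perturb $f$ so that cell images cross their target cells in a ``Markov'' way, and then read off shadowing from nested intersections (and periodic shadowing from a Brouwer-type fixed point on loops). The paper makes this precise via $m$-chained homeomorphisms, Zgliczynski--Gidea covering relations, transversality of foliations, and the Oxtoby--Ulam toolbox; what you sketch is a reasonable outline of that, even if the hardest technical steps (producing $C^0$-small, \emph{local}, \emph{measure-preserving} surgeries that create the required crossings in a topological manifold with no triangulation assumed) are glossed over.

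The genuine gap is in your openness claim. You define $\Uf_\varepsilon$ directly as the set of $f$ admitting a $\delta$ so that every $\delta$-pseudo orbit is $\varepsilon$-shadowed, and assert this is open with an argument that converts $(\delta/3)$-pseudo orbits of $g$ into $\delta$-pseudo orbits of $f$, shadows them by an $f$-orbit, and then ``upgrades'' to a $g$-orbit by continuity. This last step does not go through: even if $d(f,g)$ is tiny, the $f$-orbit and the $g$-orbit of the shadowing point $x^*$ can separate to distance $\mathrm{diam}(M)$ after finitely many iterates, and the shadowing requirement is over all of $\Z$. Uniform continuity of $f$ and compactness of $M$ give you control for any \emph{fixed} finite window, but not uniformly in $k$, which is what $\varepsilon$-shadowing of a bi-infinite pseudo orbit demands. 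So as stated, $\Uf_\varepsilon$ is not shown to be open, and the Baire argument stalls. The paper sidesteps this precisely by not working with the shadowing sets themselves: it introduces the combinatorial condition of being $m$-chained (Markovian intersections $f(c_i)\cap c_j$ for a suitable subdivision), which \emph{is} manifestly open because Markovian intersections are $C^0$-robust (Proposition~\ref{prop:mark}), and then proves in a second step that the resulting residual set $\cap_p A_{1/p}$ is \emph{contained} in the set of homeomorphisms with the special shadowing property (Lemma~\ref{lem:sp}). Genericity of a property only requires finding some residual set inside it; you do not need the property itself to cut out open sets, and here it does not in any obvious way. To repair your proof you should replace $\Uf_\varepsilon$ by a structural condition of this type — e.g.\ your own ``Markov plaque-crossing at scale $\varepsilon$'' condition — whose openness you can establish by robustness of topological crossings, and then derive shadowing as a consequence rather than trying to make shadowing itself the open condition.
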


Our proof relies on the use of Oxtoby-Ulam's theorem, which provides an adequate subdivision of the manifold $M$ in a very general case, namely we only require $M$ to be a topological manifold. As a consequence, this article also establishes the genericity of the special shadowing property in the dissipative case for the class of topological manifolds.

One must admit that the genericity in the $C^0$ topology can be considered as irrelevant from a practical viewpoint: a generic homeomorphism is nowhere differentiable, and exhibits wild behaviours such as Cantor sets of periodic points of a given period; thus generic homeomorphisms represent badly most of real-world systems. However, results are usually much easier to obtain than in more regular topologies and can constitute a first step for the studies of genericity in $C^r$ topologies, for greater numbers $r$.\\

The shadowing property was first introduced in the works of D. Anosov and R. Bowen, who proved independently that in a neighbourhood of a hyperbolic set, a diffeomorphism has the shadowing property. This result is known as the \textit{shadowing lemma} (see \cite[Paragraph 2.7]{Yocc} or \cite[Theorem 18.1.2]{kh} for a proof). As a consequence, examples of dynamical systems satisfying the shadowing property are provided by Anosov diffeomorphisms. For further details on the notion of shadowing, we refer the reader to the books  \cite{syp} of S. Y. Pilyugin's and \cite{kp} of K. Palmer.

The first proof of genericity of the shadowing property was obtained by K. Yano (see \cite{ky}) in the case $M = \mathbb{S}^1$. Then, using the possibility to approximate any homeomorphism by a diffeomorphism in dimension $n \leq 3$, K. Odani obtained in \cite{ko} the genericity of the shadowing property for manifolds of dimension less than $3$. S. Y. Pilyugin and O. B. Plamenevskaya were able to improve this result in \cite{pp} to any dimension in the case of smooth manifolds. In 2005, P. Koscielniak established in \cite{piotr} the genericity of the shadowing property for homeomorphisms on a compact manifold which possesses a triangulation (smooth manifolds or topological manifolds of dimension $\leq 3$ for example) or a handle decomposition (smooth manifolds or manifolds of dimension $\geq 6$ for example). To the best of our knowledge, this was the best result obtained so far. Notice that our global strategy of proof is similar that of \cite{piotr}.\\

Let us mention some consequences of our main theorem in the conservative case. One knows that a generic conservative homeomorphism is topologically mixing (see \cite{pa}). Now, in \cite[Proposition 23.20]{dgs}, M. Denker, C. Grillenberger and K. Sigmund prove that a homeomorphism which is topologically mixing and satisfies the shadowing property has the specification property\footnote{Morally, a homeomorphism has the periodic specification property if any finite number of pieces of orbits which are sufficiently time-spaced can be shadowed by the real orbit of a periodic point. For a rigourous definition, see \cite{dgs}.}. A very slight adaptation of their proof, using the \textit{periodic} shadowing property instead of the shadowing property, gives the \textit{periodic} specification property. Also, in \cite[Theorem 3.8]{kko}, M. Kulczycki, D. Kwietniak and P. Oprocha prove that the average shadowing property and the asymptotic average shadowing property\footnote{The definition of average shadowing is similar to that of shadowing but allows large deviations in the distance between $f(x_k)$ and $x_{k+1}$, as long as they are rare enough and balanced by a number of very small deviations. For a precise definition, see \cite{kko}.} are satisfied by a topologically mixing system possessing the shadowing property. To sum up, we obtain the following corollary:

\begin{corollary}
\label{cor}
A generic element in $\mathcal H(M,\mu)$ satisfies the specification property, the average shadowing property and the asymptotic average shadowing property.
\end{corollary}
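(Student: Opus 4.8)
The plan is to deduce Corollary~\ref{cor} from Theorem~\ref{th:shadowing} together with two facts already available in the literature: the genericity of topological mixing among conservative homeomorphisms, and the implications ``topological mixing $+$ shadowing $\Rightarrow$ specification / average shadowing'' established in \cite{dgs} and \cite{kko}. So the proof is a short combination argument; all the real work is contained in Theorem~\ref{th:shadowing} and in the cited results.

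First I would invoke the known fact (see \cite{pa}, where it follows from the Oxtoby--Ulam circle of ideas) that the set $\mathcal R_{\mathrm{mix}}$ of topologically mixing homeomorphisms is residual in $\mathcal H(M,\mu)$. By Theorem~\ref{th:shadowing}, the set $\mathcal R_{\mathrm{sh}}$ of homeomorphisms satisfying the special shadowing property (hence, in particular, the shadowing property and the periodic shadowing property) is residual in $\mathcal H(M,\mu)$. As recalled in the introduction, a finite intersection of residual sets is again residual, so $\mathcal R := \mathcal R_{\mathrm{mix}} \cap \mathcal R_{\mathrm{sh}}$ is residual in $\mathcal H(M,\mu)$; it then suffices to show that every $f \in \mathcal R$ enjoys the three announced properties.

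Fix $f \in \mathcal R$. Since $f$ is topologically mixing and has the shadowing property, \cite[Proposition 23.20]{dgs} yields the specification property; running the same argument with the periodic shadowing property in place of the shadowing property (as indicated before the statement) gives the periodic specification property as well. Likewise, since $f$ is topologically mixing and satisfies the shadowing property, \cite[Theorem 3.8]{kko} provides both the average shadowing property and the asymptotic average shadowing property. This proves the corollary.

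I do not expect a genuine obstacle. The only points requiring (minor) care are that the two external implications are stated for homeomorphisms of compact metric spaces, which of course covers our compact manifold $M$, and that what is needed from \cite{pa} is the \emph{genericity} (not merely the density) of topological mixing in the conservative case, so that the intersection with $\mathcal R_{\mathrm{sh}}$ is still residual.
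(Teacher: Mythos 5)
Your proof is correct and follows exactly the same route as the paper: combine the genericity of topological mixing (from \cite{pa}) with Theorem~\ref{th:shadowing}, take the intersection of the two residual sets, and apply \cite[Proposition 23.20]{dgs} and \cite[Theorem 3.8]{kko} to obtain the three properties.
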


\subsection{Reading guide}

As a reading guide, let us sketch the proof of Theorem \ref{th:shadowing}.

To begin with, we will apply Oxtoby-Ulam-Brown Theorem (Theorem \ref{th:oub}), that will reduce the study to the case where the phase space is the unit cube. That will allow us to define \emph{dyadic subdivisions} (see Definition \ref{def:subdivision}) on our manifold. Then, given a generic homeomorphism $f\in\Ho$, and denoting by $C_i$ the cubes of some fine enough dyadic subdivision, we will prove that:
\begin{enumerate}
\item Each time $f(C_i)\cap C_j\neq \emptyset$, the set $f(C_i)\cap C_j$ has nonempty interior. This will be obtained easily by a ``transversality'' result (first part of the proof of Lemma \ref{lem:od}).
\item Each time $f(C_i)\cap C_j\neq \emptyset$, there exists some small cubes $c_i\subset C_i$ and $c_j\subset C_j$ such that $c_i$ and $c_j$ have a Markovian intersection (see Definition \ref{def:markov}). Homeomorphisms satisfying this property will be called \emph{chained} (see Definition \ref{def:c}). This is the most delicate part of the proof, which will be obtained by creating transverse intersections for some foliations on the cube (see Definition \ref{prop:fol}).
\end{enumerate}
The property of \emph{periodic} shadowing will be deduced from the previous construction by applying a fixed point lemma (Lemma \ref{lem:fp}, due to \cite{zg}).

%The strategy of most of the proofs of genericity in the conservative case relies on the use the Oxtoby-Ulam-Brown Theorem (Theorem \ref{th:oub}), in order to obtain a decomposition of the manifold into small cubes. Then, using perturbation techniques in the interior of these cubes based on the notion of Markovian intersection (see Definition \ref{def:mark}), we will turn our homeomorphism into a robustly chained and sealed one (see Definitions \ref{def:c} and \ref{def:s}).

Section 2 of the present article is a toolbox of key technical lemmas which we will use in Section, 3 in order to prove our main theorem. Eventually, we formulate a few remarks about the proof in Section 4.

\subsection*{Acknowledgements:} The first author is founded by an IMPA/CAPES grant. The second author wishes to acknowledge Alexander Arbieto for his advice, and the members of the Instituto de Matemática of the Universidade Fereral do Rio de Janeiro for their welcome.

\section{Toolbox}

In this section, we present the main technical results which will be used in the proof of main theorem. We begin by presenting the two perturbation results for homeomorphsims we will use throughout this paper.

\subsection{Perturbation lemmas in topology $C^0$}

The two following perturbation lemmas are among the key technical results used to prove the genericity of topological properties in $\Ho$.

\begin{lemma}[Extension of finite maps]
\label{lem:ext}
Let $x_1, \dots,x_n$ be $n$ different points of $M \setminus \partial M$ and $\Phi : \left\{ x_1, \dots, x_n \right\} \rightarrow M$ be an injective map such that $d(\Phi, Id) < \delta$. Then, there exists $\varphi \in \Ho$ such that $\varphi(x_i) = \Phi(x_i)$, for all $i \in \left\{ x_1, \dots, x_n \right\} $ and $d(\varphi, Id) < \delta$. Moreover, given $n$ injective continuous paths $\gamma_i$ joining $x_i$ to $\Phi(x_i)$, the support of $\varphi$ can be chosen in any neighbourhood of the union of the paths $\gamma_i$.
\end{lemma}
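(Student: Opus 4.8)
The plan is to build $\varphi$ iteratively, moving one point $x_i$ to $\Phi(x_i)$ at a time along the prescribed path $\gamma_i$, while keeping the other marked points fixed and keeping the displacement controlled. Since all $x_i$ lie in the interior $M \setminus \partial M$, I may choose the paths $\gamma_i$ so that they also stay in the interior, are pairwise disjoint except possibly at endpoints (by injectivity of $\Phi$ the $\Phi(x_i)$ are distinct, and one can perturb the paths slightly), and each $\gamma_i$ has diameter less than $\delta$ (possible since $\mathrm{dist}(x_i,\Phi(x_i)) < \delta$ and we are on a manifold, so short paths of small diameter exist — here one may need to first replace $\gamma_i$ by a reparametrized subpath if the given path is long, but the statement lets us fatten a neighbourhood of the $\gamma_i$, so I will actually work in a thin tubular neighbourhood $U_i$ of $\gamma_i$ of diameter still $< \delta$).

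The key local tool is the following elementary fact: in a chart identified with $\R^n$, given a point $p$ and a point $q$ with $[p,q] \subset B(p,\delta)$, there is a homeomorphism of $\R^n$, supported in any prescribed neighbourhood of the segment $[p,q]$, sending $p$ to $q$ and with displacement $< \delta$ — for instance, push radially within a small ball so that $p$ flows to $q$ along the segment and the identity is kept outside. More robustly, along the whole path $\gamma_i$ one composes finitely many such elementary pushes (a "finger" or "conveyor belt" move) supported in $U_i$, each of diameter $<\delta$, whose composition carries $x_i$ to $\Phi(x_i)$ and has displacement $<\delta$ since every point moves inside one of the small balls. Call this homeomorphism $\varphi_i$; it is the identity outside $U_i$, so in particular it fixes every $x_j$ with $j \neq i$ provided we arranged $x_j \notin U_i$, which we can do by shrinking the tubes.

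Then set $\varphi = \varphi_n \circ \cdots \circ \varphi_1$. Each $\varphi_i$ fixes all $x_j$ with $j \ne i$ and sends $x_i$ to $\Phi(x_i)$; since the supports $U_i$ are pairwise disjoint, the composition sends each $x_i$ to $\Phi(x_i)$, and its support is contained in $\bigcup_i U_i$, a neighbourhood of $\bigcup_i \gamma_i$. For the displacement bound, any point $x$ lies in at most one $U_i$, so $\varphi(x) = \varphi_i(x)$ for that $i$ (or $\varphi(x)=x$), whence $d(\varphi, \mathrm{Id}) = \max_i d(\varphi_i,\mathrm{Id}) < \delta$. In the conservative case $\Ho = \mathcal H(M,\mu)$, one additionally needs each $\varphi_i$ to preserve $\mu$; this is standard (e.g. using Oxtoby--Ulam to conjugate $\mu$ to Lebesgue on the tube, then performing volume-preserving pushes, or directly citing the measure-preserving version of the extension lemma), so I will simply invoke the measure-preserving analogue of the elementary push.

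The main obstacle is the disjointness/genericity-of-position issue: the given data only guarantee $\Phi$ injective, not that the paths $\gamma_i$ can be chosen disjoint from the other marked points or from each other. Handling this cleanly is where care is needed — one perturbs the paths (using that $n \geq 2$ and the ambient space is a manifold of dimension $\geq 2$, so generic paths are disjoint) and shrinks the tubes $U_i$ accordingly, at the cost of the support being a slightly larger but still arbitrarily small neighbourhood of $\bigcup_i \gamma_i$; the displacement bound is preserved because each tube still has diameter $<\delta$.
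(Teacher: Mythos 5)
The paper itself does not include a proof of this lemma; it refers the reader to Oxtoby--Ulam \cite{ou} and to \cite[Chapter 2]{pa}, so there is no in-text proof to compare against. Your strategy (compose ``finger'' pushes supported in pairwise disjoint thin tubes around the paths) is the standard one and is in the same spirit as those references. However, as written your argument has a genuine gap.

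The gap is the case where $\Phi(\{x_1,\dots,x_n\})$ meets $\{x_1,\dots,x_n\}$, say $\Phi(x_1)=x_2$ (the hypotheses allow this, and indeed $\Phi$ could be a permutation of the $x_i$). Then $x_2$ is an \emph{endpoint} of $\gamma_1$, so $x_2\in\gamma_1\subset U_1$, and no amount of shrinking of the tube $U_1$ can remove $x_2$ from it. Your sentence ``which we can do by shrinking the tubes'' is therefore false here, and with it the claim that each $\varphi_i$ fixes all $x_j$, $j\neq i$. Concretely, $\varphi_1$ sends $x_1$ to $x_2$ and sends $x_2$ to some new point, so the composition $\varphi_n\circ\cdots\circ\varphi_1$ need not send $x_2$ to $\Phi(x_2)$ at all. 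For the same reason the tubes $U_1$ and $U_2$ cannot be made disjoint (they share the point $x_2$), contradicting the hypothesis you use to compute the displacement of the composition. This is not a cosmetic issue: it is exactly the combinatorial difficulty that a complete proof of this lemma must address. The usual fix is a two-stage construction: first pick auxiliary points $y_i$ close to $\Phi(x_i)$ with $\{y_i\}$ disjoint from $\{x_j\}\cup\{\Phi(x_j)\}$, move each $x_i$ to $y_i$ along a path close to $\gamma_i$ that avoids all other marked points (possible in the interior since $\dim M\geq 2$), and then move each $y_i$ to $\Phi(x_i)$ by a second, local push supported near $\Phi(x_i)$; alternatively one argues by induction on $n$, keeping already-placed points fixed.

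A secondary issue: your justification of $d(\varphi_i,\mathrm{Id})<\delta$ is circular for a long path. You assert that the tubular neighbourhood $U_i$ of $\gamma_i$ can be taken of diameter $<\delta$, but a tube around $\gamma_i$ has diameter at least $\mathrm{diam}(\gamma_i)$, which need not be $<\delta$ just because the \emph{endpoints} are $\delta$-close; and replacing $\gamma_i$ by a reparametrized subpath, as you suggest, yields a path that no longer joins $x_i$ to $\Phi(x_i)$. The ``every point moves inside one of the small balls'' justification for the conveyor-belt composition also fails for a long path, since a point in the overlap of consecutive balls is moved by several successive pushes. In fact for a path of large diameter in a thin tube one cannot in general obtain both support in the tube and displacement $<\delta$ (a $\cap$-shaped path is a counterexample); the statement is meant to be read with short paths, e.g.\ geodesics, for the quantitative bound, and the ``moreover'' part gives control of the support at the possible cost of enlarging the displacement. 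It would be worth making this explicit rather than trying to have both for arbitrary $\gamma_i$.

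Finally, the conservative case is dispatched with ``this is standard''; since the $\mu$-preserving statement is the one actually used in the paper, you should at least indicate that the elementary pushes can be made volume-preserving (via Oxtoby--Ulam--Brown charts or by citing the measure-preserving version in \cite{ou} or \cite[Chapter 2]{pa}).
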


\noindent
The proof of this lemma is rather easy but a bit technical. For a detailed proof, see \cite{ou} or \cite[Chapter 2]{pa}.

Before stating the second lemma, we recall the definition of a \textit{bicollared} embedding, which avoids pathological embeddings such as the Alexander horned sphere.

\begin{definition}[Bicollared embeddings]
An embedding $i$ of a manifold $\Sigma$ into an manifold $M$ is said to be \textit{bicollared} if there exists an embedding $j : [-1,1] \times \Sigma \rightarrow M$ such that $j_{\left\{0\right\} \times \Sigma} = i$.
\end{definition}

\begin{lemma}[Local modification]
\label{lem:modif}
Let $\sigma_1,\sigma_2,\tau_1,\tau_2$ be four bicollared embeddings of $\mathbb{S}^{n-1}$ in $\R^n$, such that $\sigma_1$ is in the bounded connected component of $\sigma_2$ and $\tau_1$ in the bounded connected component of $\tau_2$. Let $A_1$ be the bounded connected component of $\R^n - \sigma_1$ and $B_1$ be the bounded connected component of $\R^n - \tau_1$, $\Sigma$ the connected component of $\R^n - \left(\sigma_1 \cup \sigma_2 \right)$ with boundaries $\sigma_1 \cup \sigma_2$ and $\Lambda$ the connected component of $R^n - \left(\tau_1 \cup \tau_2 \right)$ with boundaries $\tau_1 \cup \tau_2$, $A_2$ the unbounded connected component of $R^n - \sigma_2$ and $B_2$ the unbounded connected component of $R^n - \tau_2$.

Consider two homeomorphisms $f_i : A_i \rightarrow B_i$, such that they both preserve or reverse the orientation. Then, there exists a homeomorphism $f : \R^n \rightarrow \R^n$ such that $f = f_1$ on $A_1$ and $f=f_2$ on $A_2$.

Moreover, if we assume $\lambda(A_1) = \lambda(B_1)$ and $\lambda(\Sigma) = \lambda(\Lambda)$ and the homeomorphisms $f_i$ conservative, then $f$ can be chosen conservative too.
\end{lemma}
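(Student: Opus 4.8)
The plan is to build $f$ region by region, using the three pieces $A_1$, $\Sigma$ (the "annular" shell between $\sigma_1$ and $\sigma_2$), and $A_2$ which together tile $\R^n$ up to the two spheres $\sigma_1,\sigma_2$; likewise $B_1,\Lambda,B_2$ on the target side. On $A_1$ we are forced to take $f=f_1$ and on $A_2$ we are forced to take $f=f_2$; the content of the lemma is to interpolate across the shell $\Sigma$ by a homeomorphism onto $\Lambda$ that matches $f_1|_{\sigma_1}$ on the inner boundary and $f_2|_{\sigma_2}$ on the outer boundary. Since $\sigma_i,\tau_i$ are bicollared, each of $\Sigma$ and $\Lambda$ is homeomorphic to $\mathbb{S}^{n-1}\times[0,1]$ (the annular/collar structure is exactly what bicollaredness gives us, and is where we avoid Alexander-horned-sphere pathologies). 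So the problem reduces to the following: given two homeomorphisms $g_0,g_1:\mathbb{S}^{n-1}\to\mathbb{S}^{n-1}$ (namely the boundary restrictions of $f_1$ and $f_2$, transported through the collar identifications), find a homeomorphism $G:\mathbb{S}^{n-1}\times[0,1]\to\mathbb{S}^{n-1}\times[0,1]$ with $G|_{\mathbb{S}^{n-1}\times\{0\}}=g_0$ and $G|_{\mathbb{S}^{n-1}\times\{1\}}=g_1$.

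First I would record the orientation bookkeeping: the hypothesis that $f_1$ and $f_2$ both preserve or both reverse orientation guarantees that $g_0$ and $g_1$ lie in the same connected component of $\mathrm{Homeo}(\mathbb{S}^{n-1})$ up to the part that matters, i.e. they are ambient-isotopic type-compatible so that no obstruction arises to joining them; concretely one reduces to the orientation-preserving case by composing with a fixed reflection if necessary. Then the key step is the Alexander-type trick: pick an isotopy — more precisely, one uses that any orientation-preserving homeomorphism of $\mathbb{S}^{n-1}$ can be connected to the one we want through a path of homeomorphisms once we are allowed to pass to the cone, or alternatively one simply defines $G(\theta,t)$ by conically interpolating. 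The cleanest route is: conjugate so that $g_1=\mathrm{Id}$ (replace $f_2$ by $f_2$ precomposed with a homeomorphism of the shell), set $h=g_0$, and then use the standard fact that the mapping cylinder construction $G(\theta,t)=(\text{radial interpolation between }h(\theta)\text{ and }\theta)$ on $\mathbb{S}^{n-1}\times[0,1]$ is a homeomorphism — this is Alexander's trick applied on the collar rather than on the full disk. Gluing $f_1$, this interpolation $G$ (read back through the collar charts of $\Sigma$ and $\Lambda$), and $f_2$ along their common boundary spheres, and invoking the gluing lemma for continuous maps agreeing on closed sets, produces the desired global homeomorphism $f:\R^n\to\R^n$; continuity of $f^{-1}$ follows symmetrically since each piece is a homeomorphism onto its image and the images tile $\R^n$.

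For the conservative addendum, the measure hypotheses $\lambda(A_1)=\lambda(B_1)$ and $\lambda(\Sigma)=\lambda(\Lambda)$ are exactly what is needed so that the interpolation can be taken volume-preserving: $f_1$ already carries $\lambda|_{A_1}$ to $\lambda|_{B_1}$ and $f_2$ carries $\lambda|_{A_2}$ to $\lambda|_{B_2}$, so the pushforward of Lebesgue measure on $\Sigma$ under \emph{any} homeomorphism $\Sigma\to\Lambda$ extending the prescribed boundary data is a good measure on $\Lambda$ of total mass $\lambda(\Sigma)=\lambda(\Lambda)$; by the Oxtoby–Ulam theorem (applied to the compact manifold-with-boundary $\overline{\Lambda}\cong\mathbb{S}^{n-1}\times[0,1]$, whose boundary has measure zero on both sides) there is a homeomorphism of $\overline{\Lambda}$ fixing the boundary pointwise and pushing this measure to $\lambda|_\Lambda$; post-composing the interpolation $G$ with it makes the middle piece conservative while not disturbing the boundary matching, hence the glued $f$ is conservative. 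The main obstacle I anticipate is not any single estimate but the careful verification that the collar coordinates on $\Sigma$ and on $\Lambda$ can be chosen compatibly with the boundary homeomorphisms $f_i|_{\sigma_i}$ so that the Alexander interpolation genuinely extends $f_1$ and $f_2$ continuously across $\sigma_1$ and $\sigma_2$ — i.e. making the gluing lemma applicable requires that the maps literally agree (not merely up to isotopy) on the shared spheres, which forces one to absorb all the isotopy freedom into reparametrizations of the collars before writing down $G$.
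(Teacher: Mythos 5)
The paper does not actually prove Lemma \ref{lem:modif}: it cites \cite{df} and \cite[Chapter 3]{pa} and explicitly remarks that ``its proof involves the very hard annulus theorem.'' That remark points at exactly the step you treat as free. Your overall strategy --- keep $f_1$ on $A_1$ and $f_2$ on $A_2$, reduce the gluing problem to producing a homeomorphism of $\mathbb{S}^{n-1}\times[0,1]$ with prescribed restrictions on the two boundary spheres, then repair the measure on the middle piece with Oxtoby--Ulam --- is the correct one, and your treatment of the conservative addendum is sound. The gap is the claim that ``the annular/collar structure is exactly what bicollaredness gives us.'' Bicollaredness of $\sigma_1$ and $\sigma_2$ \emph{separately} gives a product collar on each side of each sphere; it does \emph{not} give that the whole closed region $\overline{\Sigma}$ between them is homeomorphic to $\mathbb{S}^{n-1}\times[0,1]$. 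That statement is the Annulus Theorem --- classical for $n\le 3$, Kirby for $n\ge 5$, Quinn for $n=4$ --- and it is the genuinely hard content of this lemma. (What bicollaredness buys you directly, via Brown's generalized Schoenflies theorem, is that each of $\overline{A_1},\overline{B_1}$ and the complementary disks of $\sigma_2,\tau_2$ in $\mathbb{S}^n$ is a topological ball; the annulus statement is strictly stronger and does not follow formally from Schoenflies.) Your proposal would be fine if you invoked the Annulus Theorem by name at that point; as written it presents the main difficulty as a triviality.

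A secondary issue: the ``Alexander's trick applied on the collar'' step is also underspecified. The Alexander cone of $h=g_1\circ g_0^{-1}$ on $D^n$ restricts on every concentric sphere to a rescaled copy of $h$, not to the identity, so it does not on its own produce a homeomorphism of $\mathbb{S}^{n-1}\times[0,1]$ equal to $h$ on one end and to the identity on the other. What is really needed is that every orientation-preserving homeomorphism of $\mathbb{S}^{n-1}$ is (pseudo-)isotopic to the identity, which in high dimension again rests on the stable homeomorphism theorem rather than on an elementary radial interpolation; this is part of the same circle of ideas as the annulus theorem, and once you grant that input the argument does close up, but the proposal should not present it as a simple cone construction.
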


\begin{figure}[h!]
\begin{center}

\includegraphics[scale=0.4]{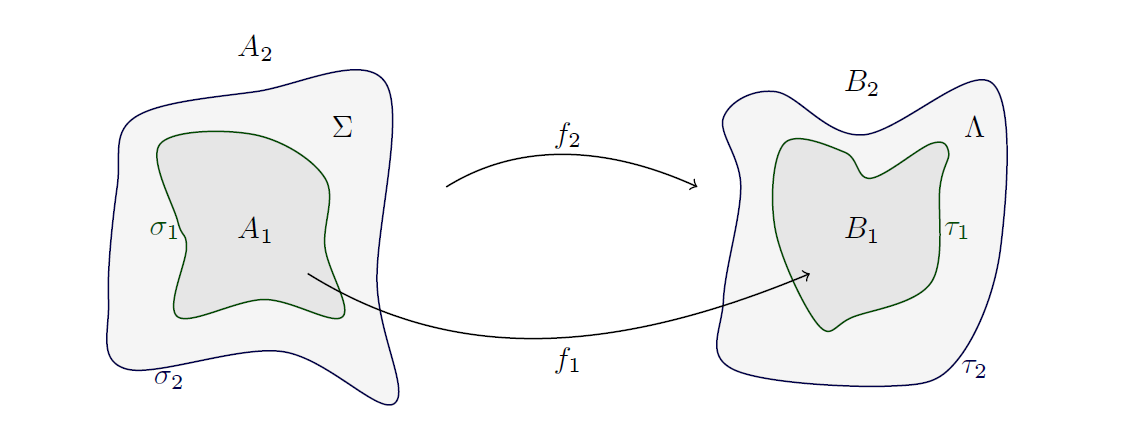} 
\caption{The local modification theorem}

\end{center}
\end{figure}

Although the statement of local modification lemma is quite natural, its proof involves the very hard annulus theorem. For a proof, see \cite{df} or \cite[Chapter 3]{pa}.

\subsection{Dyadic subdivision}

$\lambda$ will denote the Lebesgue measure on $\R^n$ and $I^n = [0,1]^n$, the unit cube in $\R^n$. Recall that $M$ is a compact connected manifold with or without boundary, of dimension $n \geq 2$, endowed with a good measure $\mu$. One of the most fundamental results in the theory of generic (conservative) homeomorphisms is a combination of theorems of Brown (see \cite{brown}) and of Oxtoby-Ulam (see \cite{ou}). A detailed proof of this result can be found in \cite{ap}, Appendix 2:

\begin{theorem}[Oxtoby-Ulam-Brown]
\label{th:oub}
Let $\mu$ be a good Borel probability measure on $M$. Then, there exists $\phi : I^n \rightarrow M$ continuous such that:
\begin{enumerate}
\item $\phi$ is surjective,
\item $\phi|_{\overset{\circ}{I^n}}$ is a homeomorphism on its image,
\item $\phi(\partial I^n )$ is a closed subset of $M$, of empty interior, and disjoint of $\phi(\overset{\circ}{I^n})$,
\item $\mu(\phi(\partial I^n)) = 0$,
\item $\phi_* (\lambda) = \mu$.
\end{enumerate}
\end{theorem}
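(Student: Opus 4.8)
The plan is to realise $\phi$ as a composition $\psi\circ\theta$, where $\psi:I^n\to M$ is produced by Brown's theorem and provides a ``cubing'' of $M$, while $\theta:I^n\to I^n$ is produced by the Oxtoby-Ulam theorem and corrects the measure transported to the cube so that it becomes Lebesgue. First I would invoke Brown's theorem (\cite{brown}): as $M$ is a compact connected $n$-manifold, there is a continuous surjection $\psi:I^n\to M$ whose restriction to $\overset{\circ}{I^n}$ is a homeomorphism onto an open set $U:=\psi(\overset{\circ}{I^n})$, and such that $K:=\psi(\partial I^n)=M\setminus U$ is closed with empty interior; in particular $U$ is open and dense, and since $U$ contains no point of $\partial M$ (invariance of domain), $\partial M\subset K$. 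This already yields properties (1), (2) and (3) for $\psi$.

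The delicate point --- and the main obstacle --- is to arrange in addition $\mu(K)=0$. This is genuinely necessary: the map $\phi$ we shall build will satisfy $\phi^{-1}(K)=\partial I^n$, whence $\phi_*\lambda(K)=0$, so that (5) can hold only if $\mu(K)=0$. Such a control cannot be read off from an arbitrary Brown map, since a closed nowhere dense set may carry positive $\mu$-mass; one must instead revisit the construction of $\psi$. Brown's map is obtained by starting from a single chart ball and successively absorbing finitely many chart balls that cover $M$, the ``seam'' produced at each absorption step being the boundary sphere of a topological ball; in a chart one is free to take such a sphere to be a round sphere whose radius avoids a countable exceptional set, hence $\mu$-null (concentric round spheres being pairwise disjoint, only countably many of them can have positive $\mu$-mass). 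Together with $\mu(\partial M)=0$ (goodness of $\mu$), this allows one to choose $\psi$ so that $K$ lies in a countable union of $\mu$-null sets, i.e. $\mu(K)=0$; the detailed verification is carried out in \cite[Appendix 2]{ap}.

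Granting $\mu(K)=0$, I would then transport $\mu$ to the cube. Since $\psi|_{\overset{\circ}{I^n}}$ is a homeomorphism onto $U$ and $\mu(U)=1$, setting $\nu(B):=\mu(\psi(B))$ for Borel $B\subset\overset{\circ}{I^n}$ and $\nu(\partial I^n):=0$ defines a Borel probability measure $\nu$ on $I^n$; it is non-atomic, has full support (because $\overset{\circ}{I^n}$ is dense, $\psi$ is an open map there, and $\mu$ has full support) and vanishes on $\partial I^n$, hence is a good measure. By the Oxtoby-Ulam theorem (\cite{ou}) there is a homeomorphism $\theta:I^n\to I^n$, equal to the identity on $\partial I^n$, with $\theta_*\lambda=\nu$. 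I would finally set $\phi:=\psi\circ\theta$: it is continuous and surjective, $\theta$ maps $\overset{\circ}{I^n}$ homeomorphically onto itself and fixes $\partial I^n$ pointwise, so $\phi|_{\overset{\circ}{I^n}}$ is a homeomorphism onto $U$ and $\phi(\partial I^n)=K$, which gives (1)--(4). For (5), every Borel set $A\subset U$ satisfies $\phi_*\lambda(A)=\lambda\big(\theta^{-1}(\psi^{-1}(A))\big)=\nu\big(\psi^{-1}(A)\big)=\mu(A)$, while $\phi^{-1}(K)=\theta^{-1}(\partial I^n)=\partial I^n$ is $\lambda$-null, in accordance with $\mu(K)=0$; therefore $\phi_*\lambda=\mu$.
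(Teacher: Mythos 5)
The paper does not prove this theorem; it simply cites \cite{ap}, Appendix~2 for a detailed proof. Your sketch correctly reconstructs the standard argument given there: take a Brown ``cubing'' map $\psi:I^n\to M$, refine its construction so that the seam set $K=\psi(\partial I^n)$ is $\mu$-null (the genuinely delicate step, which you rightly single out), transport $\mu$ to a good measure $\nu$ on $I^n$, apply the Oxtoby--Ulam homeomorphic-measures theorem to get $\theta$ with $\theta_*\lambda=\nu$ and $\theta|_{\partial I^n}=\mathrm{id}$, and set $\phi=\psi\circ\theta$. This is essentially the same approach as the cited reference, and the verification of properties (1)--(5) is correct.
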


Now, once for all, we fix such a map $\phi : I^n \rightarrow M$. This allows us to introduce the notion of dyadic subdivision.

\begin{definition}[Dyadic subdivision]\label{def:subdivision}
A \emph{dyadic cube} of order $m$ of $M$ is the image in $M$ by $\phi$ of a cube $\prod_{i=1}^{n} [\dfrac{k_i}{2^m},\dfrac{k_i + 1}{2^m}]$, with $0 \leq k_i \leq 2^m - 1$. The \emph{dyadic subdivision} $\mathcal{D}_m$ of order $m$ of the manifold $M$ is the collection of dyadic cubes of order $m$ of $M$. In the following, $p_m = 2^{nm}$ will denote the number of cubes of the subdivision.
\end{definition}

For a dyadic subdivision $\mathcal{D}_m = (C_i)_{1 \leq i \leq p_m}$, we will denote by $\chi (\mathcal{D}_m)$ the maximum diameter of its cubes, namely:
\[ \chi (\mathcal{D}_m) = \max_{C_i \in \mathcal{D}_m} \text{diam } C_i.\]
These dyadic subdivisions satisfy some good properties:
\begin{itemize}
\item Each cube is connected and obtained as the closure of the open cube $\prod_{i=1}^{n} ]\dfrac{k_i}{2^m},\dfrac{k_i + 1}{2^m}[$.
\item For all $m$, $\mathcal{D}_m$ is a cover of $M$ by a finite number of cubes of same measure and whose interiors pairwise disjoint.
\item For all $m$, $\mathcal{D}_{m+1}$ is a refinement of the subdivision $\mathcal{D}_m$.
\item The measure of the cubes as well as $\chi(\mathcal{D}_m)$ tend to $0$ as $m \rightarrow \infty$. The measure of the boundary of each cube is zero.
\end{itemize}
Note that the image of the dyadic subdivision by any $f \in \Ho$ is still a dyadic subdivision satisfying the same properties.

\subsection{Markovian intersections}

The mechanism of perturbation which will produce shadowing property is based on the notion of chained homeomorphism (Definition \ref{def:c}, itself based on \textit{Markovian intersections}).

\begin{definition}
\label{def:mark}
We call \textit{rectangle} a subset $R \subset M$ such that $R = \phi(I^n)$, where $\phi: I^n\to \phi(I^n)\subset M$ is a homeomorphism. We call \textit{faces} of $R$ the image by $\phi$ of the faces\footnote{By definition, a face of $I^n$ is one of the $n-1$-dimensional cubes constituting the boundary of $I^n$.} of $I^n$. We call \textit{horizontal} the faces $R^- = \phi(I^{n-1} \times \left\{ 0 \right\})$ and $R^+ = \phi(I^{n-1} \times \left\{ 1 \right\})$ and \textit{vertical} the others. We say that a rectangle $R' \subset R$ is a \emph{strict horizontal} (resp. vertical) subrectangle of $R$ if the horizontal (resp. vertical) faces of $R'$ are strictly disjoint from those of $R$ and the vertical (resp. horizontal) faces of $R'$ are included in those of $R$.
\end{definition}

\noindent
Given $x \in \R^n$, we will denote by $\pi_1(x)$ its first coordinate. Following P. Zgliczynski and M. Gidea's article \cite{zg}, we define Markovian intersections in the following way:

\begin{definition}\label{def:markov}
Let $f$ be a homeomorphism of $M$, $R_1$ and $R_2$ two rectangles of $M$. We say that $f(R_1) \cap R_2$ is a \textit{Markovian intersection} if there exists an horizontal subrectangle $H$ of $R_1$ and a homeomorphism $\phi$ from a neighbourhood of $H\cup R_2$ to $\R^n$ such that:
\begin{itemize}
\item $\phi(R_2) = [-1,1]^n$;
\item either $\phi(f(H^+)) \subset \left\{x\mid \pi_1(x) > 1 \right\}$ and $\phi(f(H^-)) \subset \left\{x\mid \pi_1(x) < -1 \right\}$, or $\phi(f(H^-)) \subset \left\{x\mid \pi_1(x) > 1 \right\}$ and $\phi(f(H^+)) \subset \left\{x\mid \pi_1(x) < -1 \right\}$;
\item $\phi(H) \subset \left\{x\mid \pi_1(x) < -1 \right\} \cup [-1,1]^n \cup \left\{x\mid \pi_1(x) > 1 \right\}$.
\end{itemize}
\end{definition}

\begin{figure}[h!]
\begin{center}

\includegraphics[scale=0.8]{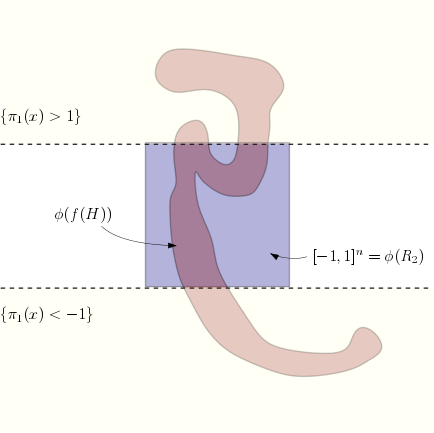} 
\caption{A Markovian intersection}

\end{center}
\end{figure}

%\begin{definition}\label{def:markov}
%Let $f$ be a homeomorphism of $M$, $R_1$ and $R_2$ two rectangles of $M$ and $V$ a connected component of $f(R_1) \cap R_2$. We say that $V$ is a \emph{Markovian component} if it is a strict vertical subrectangle of $R_2$ and a strict vertical horizontal subrectangle of $f(R_1)$. We say that the intersection $f(R_1) \cap R_2$ is \emph{Markovian} if one of its connected components is Markovian.
%\end{definition}

\noindent
The following two results show that the markovian intersections have nice behaviours under $\mathcal{C}^0$ perturbation and iteration. Proofs of them can be obtained as a combination of Theorem 16, Theorem 13 and Corollary 12 of \cite{zg}.

\begin{proposition}
\label{prop:mark}
A Markovian intersection is $\mathcal{C}^0$ robust, namely if the intersection $f(R_1) \cap R_2$ is Markovian, then it is still true in a $\mathcal{C}^0$ neighbourhood of $f$.
\end{proposition}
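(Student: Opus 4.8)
The statement says that the three conditions defining a Markovian intersection in Definition \ref{def:markov} are \emph{open} with respect to $f$. The plan is to keep the very same witnesses --- the horizontal subrectangle $H \subset R_1$ and the chart $\phi$, defined on some neighbourhood $U$ of $H \cup R_2$ --- and to check that the three bullets survive a small $\mathcal{C}^0$-perturbation of $f$. The last two bullets, $\phi(R_2) = [-1,1]^n$ and $\phi(H) \subset \{\pi_1 < -1\} \cup [-1,1]^n \cup \{\pi_1 > 1\}$, do not involve $f$ at all, so they hold verbatim for any $g$ in place of $f$. Everything thus reduces to the middle bullet, concerning the positions of $\phi(g(H^+))$ and $\phi(g(H^-))$.

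Assume we are in the first case, $\phi(f(H^+)) \subset \{\pi_1 > 1\}$ and $\phi(f(H^-)) \subset \{\pi_1 < -1\}$ (the other case is symmetric). The set $H^+$ is compact, hence so is $f(H^+)$, and $\phi(f(H^+))$ is a compact subset of the \emph{open} set $\phi(U) \cap \{\pi_1 > 1\}$. Pulling back through the homeomorphism $\phi$, one obtains a compact neighbourhood $K^+$ of $f(H^+)$, contained in $U$, on which $\pi_1 \circ \phi > 1$; consequently there is $\eta^+ > 0$ such that the $\eta^+$-neighbourhood of $f(H^+)$ in $M$ is contained in $K^+$. Defining $\eta^- > 0$ analogously from $f(H^-)$ and setting $\eta := \min(\eta^+, \eta^-)$, I would then argue: for any $g \in \Ho$ with $d(f,g) < \eta$ and any $x \in H^+$, one has $\text{dist}(g(x), f(x)) < \eta \leq \eta^+$, so $g(x)$ lies in $K^+ \subset U$ and $\pi_1(\phi(g(x))) > 1$; hence $\phi(g(H^+)) \subset \{\pi_1 > 1\}$, and likewise $\phi(g(H^-)) \subset \{\pi_1 < -1\}$. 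The middle bullet therefore also holds for $g$ with the same $H$ and $\phi$, so $g(R_1) \cap R_2$ is again a Markovian intersection, and the $\eta$-ball around $f$ for the distance $d$ is the desired $\mathcal{C}^0$ neighbourhood.

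The only delicate point --- and really the sole obstacle --- is that $\phi$ is a priori only defined on the neighbourhood $U$, so one must make sure the perturbed images $g(H^{\pm})$ do not leave $U$; this is exactly the role of the compact neighbourhoods $K^{\pm}$, whose existence rests only on compactness of $H^{\pm}$ and continuity of $\phi$ and $\phi^{-1}$. Everything else is a direct unwinding of Definition \ref{def:markov}. (Alternatively, one may simply invoke the corresponding robustness statement, Corollary 12 together with Theorem 13 of \cite{zg}.)
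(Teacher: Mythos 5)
Your argument is correct, and it actually supplies more than the paper does: the paper does not prove Proposition~\ref{prop:mark} at all, but simply refers the reader to Theorems~13, 16 and Corollary~12 of Zgliczynski--Gidea~\cite{zg}. Your proof is a direct unwinding of Definition~\ref{def:markov}: you correctly identify that the first and third bullets do not mention $f$ and hence persist with the same witnesses $H$ and $\phi$, so only the middle bullet needs attention, and you handle it by the standard compactness--continuity argument (the compact set $\phi(f(H^{\pm}))$ sits inside the open half-space, so there is a positive $\text{dist}$-margin that a $\mathcal{C}^0$-small perturbation cannot cross, and the compact neighbourhoods $K^{\pm}$ guarantee that $g(H^{\pm})$ stays inside the domain $U$ of $\phi$). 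One small wording nit: you should say the $\eta^+$-neighbourhood of $f(H^+)$ lies in the \emph{interior} of $K^+$ (equivalently, directly take $\eta^+$ smaller than $\text{dist}\bigl(f(H^+), M \setminus \phi^{-1}(\{\pi_1 > 1\})\bigr)$); but this is cosmetic. Overall your elementary argument is a good self-contained substitute for the external citation that the paper relies on.
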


\begin{proposition}
\label{prop:mark2}
Given three rectangles $R_1$, $R_2$ and $R_3$, if the intersections $f(R_1) \cap R_2$ and $f(R_2) \cap R_3$ are Markovian, then the intersection $f^2(R_1) \cap R_3$ is Markovian too.
\end{proposition}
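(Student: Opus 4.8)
The plan is to recognize the statement as the composition law for covering relations (Zgliczyński--Gidea), and to prove it by chaining together the two crossings supplied by the hypotheses. First I would unpack the data. From ``$f(R_1)\cap R_2$ Markovian'' I get a horizontal subrectangle $H_1\subset R_1$ and a chart $\phi_1$ near $H_1\cup R_2$ with $\phi_1(R_2)=[-1,1]^n$, with $\phi_1(f(H_1^+))$ and $\phi_1(f(H_1^-))$ landing in $\{\pi_1>1\}$ and $\{\pi_1<-1\}$ respectively, and with $\phi_1(H_1)$ inside the tube $T:=\{\pi_1<-1\}\cup[-1,1]^n\cup\{\pi_1>1\}$; from ``$f(R_2)\cap R_3$ Markovian'' I get the analogous $H_2\subset R_2$ and a chart $\phi_2$ near $H_2\cup R_3$ with $\phi_2(R_3)=[-1,1]^n$. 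By the ``either\ldots or'' clause of Definition~\ref{def:markov} --- post-composing $\phi_2$ with $(x_1,x_2,\dots,x_n)\mapsto(-x_1,x_2,\dots,x_n)$ if necessary --- I may assume both charts use the same orientation convention.

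The substance of the proof is to compose the two ``straightened'' versions of $f$: the map $A$ describing $f$ in $\phi_1$-coordinates near $H_1$, which carries $H_1$ across $[-1,1]^n=\phi_1(R_2)$, and the map $B$ describing $f$ in $\phi_2$-coordinates near $H_2$, which carries $H_2$ across $[-1,1]^n=\phi_2(R_3)$. The obstruction is that these live in a priori unrelated charts and that $A$ already pushes the horizontal faces of $H_1$ out of $[-1,1]^n$, where $B$ is not defined. Following the mechanism of \cite[Theorem~13]{zg}, the remedy is to express the second crossing relative to the first using the transition homeomorphism between $\phi_1$ and $\phi_2$ over $R_2$, and to extend $B$ across the two first-coordinate faces of $[-1,1]^n$ by a map preserving $\{\pi_1>1\}$ and $\{\pi_1<-1\}$ (this is compatible with what $B$ already does on $\phi_2(f(H_2^{\pm}))$). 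One then checks, up to the homotopies allowed in the definition of a covering relation, that the composed map sends the horizontal faces of $H_1$ out the two first-coordinate faces of $[-1,1]^n$, keeps the image of $H_1$ inside $T$, and has local degree $\pm1$ over $[-1,1]^n$ --- the degree being handled by multiplicativity of the Brouwer degree under composition, together with the fact that the extension of $B$ adds no new preimages of the interior of $[-1,1]^n$. Translating back (this is where \cite[Theorem~16 and Corollary~12]{zg} make the dictionary ``Markovian intersection $\leftrightarrow$ covering relation'' precise), one obtains a horizontal subrectangle $H\subset R_1$ --- essentially $H_1$, shrunk if needed so that $f(H)$ sits inside the domain of $\phi_2$ --- and, from $\phi_2$, a chart witnessing that $f^2(R_1)\cap R_3$ is Markovian.

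I expect the main obstacle to be exactly this middle step: carrying out the extension of $B$ and the shrinking of $H_1$ consistently, and then making the degree/homotopy argument that a nonzero-degree crossing composed with another nonzero-degree crossing is again a nonzero-degree crossing which avoids the stable sides of the target. This is the content of \cite[Theorem~13]{zg}, and the cleanest write-up simply cites it after translating the two hypotheses and the conclusion into the language of covering relations via \cite[Theorem~16 and Corollary~12]{zg}. (In the setting where the proposition is used, the three rectangles all lie inside the single chart $\phi(\overset{\circ}{I^n})$ of Theorem~\ref{th:oub}, so one even has a genuine common chart near $H\cup R_3$, matching the letter of Definition~\ref{def:markov}; and Proposition~\ref{prop:mark} plays no role here, robustness under $\mathcal{C}^0$ perturbation being a separate matter.)
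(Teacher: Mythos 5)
Your proposal is correct and takes the same approach as the paper: the paper offers no self-contained proof of this proposition, but simply cites Theorems 13, 16 and Corollary 12 of Zgliczyński--Gidea \cite{zg}, which is exactly what you end up doing after accurately sketching the chart-transition, extension, and degree-multiplicativity mechanism underlying \cite[Theorem~13]{zg}. Your closing observations --- that in the application the rectangles lie in a common chart, and that Proposition~\ref{prop:mark} is irrelevant here --- are both correct.
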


\begin{definition}[$m$-chained homeomorphism]
\label{def:c}
%Given a homeomorphism $f$, we call \textit{$m$-chain} a sequence $C_0, \dots, C_n$ of cubes of the dyadic subdivision $\mathcal{D}_m$, such that for every $i \in \left\{0,\dots,n-1 \right\}, f(C_i) \cap C_{i+1} \neq \emptyset$. This $m$-chain will be denoted by $C_0 \rightarrow \dots \rightarrow C_n$.

Given a homeomorphism $f$, we say that $f$ is \textit{$m$-chained} if for every cube $C_i$ of $\Df_m$ there exists a rectangle $c_i\subset C_i$, such that for every $i,j$ such that $f(C_i)\cap C_j \neq \emptyset$, the intersection $f(c_i)\cap c_j$ is nonempty and Markovian.
\end{definition}

\noindent
We will use the following lemma to obtain periodic points for the periodic shadowing property. It is a simplified version of \cite[Theorems 4, 16]{zg}. The proof is based on an argument of homotopy and theory of degree:

\begin{lemma}
\label{lem:fp}
Let $f$ be a homeomorphism and $R$ be a rectangle such that $f(R) \cap R$ is Markovian. Then, there exists a fixed point for $f$ in $R$.
\end{lemma}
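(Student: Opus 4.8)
The plan is to reduce the statement to a degree-theoretic fixed point argument in the model cube $[-1,1]^n$, exploiting the coordinate structure provided by the definition of a Markovian intersection. First I would invoke Definition~\ref{def:markov} with $R_1=R_2=R$: there is a horizontal subrectangle $H$ of $R$ and a homeomorphism $\phi$ from a neighbourhood of $H\cup R$ onto $\R^n$ with $\phi(R)=[-1,1]^n$, such that $\phi(f(H^+))$ and $\phi(f(H^-))$ lie on opposite sides of the slab $\{|\pi_1(x)|\le 1\}$, while $\phi(H)\subset\{\pi_1(x)<-1\}\cup[-1,1]^n\cup\{\pi_1(x)>1\}$. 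Conjugating everything by $\phi$, it suffices to find a fixed point of $g:=\phi\circ f\circ\phi^{-1}$ inside $\phi(H)\subset[-1,1]^n$ (any such point is automatically in $R$ since $H\subset R$). So from now on I work with $g$ and with the cube $Q=[-1,1]^n$, writing a point $x\in\R^n$ as $x=(x_1,x')$ with $x_1=\pi_1(x)$ and $x'\in\R^{n-1}$.

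The key step is to set up the right map to which the mod-$2$ (or integer) degree / Poincaré--Miranda-type argument applies. Consider the continuous map $F:\overline H^{\,\phi}\to\R^n$ (where $\overline H^{\,\phi}=\phi(H)$, a rectangle with the two horizontal faces $\phi(H^\pm)$ and $n-1$ pairs of vertical faces) defined by
\[
F(x) \;=\; \bigl(\,\pi_1(g(x))-x_1,\; \operatorname{pr}'(x)-\operatorname{pr}'\!\circ\, r(x)\,\bigr),
\]
heuristically: in the first coordinate we measure the displacement under $g$ in the expanding (horizontal) direction, and in the remaining $n-1$ coordinates we measure the displacement in the contracting (vertical) directions, after composing with a suitable retraction $r$ onto $Q$ that does not affect the vertical coordinates on $\overline H^{\,\phi}$. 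A zero of $F$ is exactly a fixed point of $g$ in $\overline H^{\,\phi}$. The first component has a definite sign on each horizontal face: on $\phi(H^+)$ we have $\pi_1(g(x))>1\ge x_1$ (up to the choice of which face goes which way), and on $\phi(H^-)$ we have $\pi_1(g(x))<-1\le x_1$, so the first component of $F$ is nonzero and of opposite signs on $\phi(H^+)$ and $\phi(H^-)$. The remaining $n-1$ components, by the very construction of $r$, point outward along the corresponding vertical faces of the rectangle $\overline H^{\,\phi}$ (this is where the hypothesis $\phi(H)$ may stick out of the slab only in the $\pi_1$-direction is used: the vertical faces of $H$ stay inside the vertical faces of $R$, so their images stay in $Q$ in the $x'$-coordinates). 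Hence $F$ restricted to $\partial \overline H^{\,\phi}$ never vanishes and is, up to homotopy within non-vanishing maps, the identity map of a sphere onto itself; by the Poincaré--Miranda theorem (equivalently, a nonzero-degree argument, exactly as in \cite[Theorems 4, 16]{zg}), $F$ has a zero in the interior, giving the desired fixed point.

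I expect the main obstacle to be bookkeeping rather than conceptual: one must choose the retraction $r$ and organize the coordinates so that the displacement map $F$ genuinely has the boundary behaviour of a map of nonzero degree, \emph{uniformly} in all $n-1$ vertical directions and robustly under the fact that $\phi(H)$ is only a subrectangle of $Q=\phi(R)$ (so $g$ need not map $\overline H^{\,\phi}$ into $Q$, and a priori $g(\overline H^{\,\phi})$ can leave the cube in uncontrolled ways in the $x'$-directions near the horizontal faces). The clean way around this, and the route I would actually take, is not to re-derive the degree computation but to cite \cite{zg}: Lemma~\ref{lem:fp} is literally the covering-relation fixed point theorem of Zgliczyński--Gidea specialized to a single rectangle covering itself ($R\Longrightarrow R$ in their terminology), so after the conjugation by $\phi$ above one simply quotes \cite[Theorem~4]{zg} to produce a point $x^\ast\in \overline H^{\,\phi}\subset Q$ with $g(x^\ast)=x^\ast$, and pulls it back by $\phi^{-1}$ to a fixed point of $f$ in $H\subset R$. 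The only thing to verify carefully for that citation is that a Markovian intersection in the sense of Definition~\ref{def:markov} is exactly an instance of their covering relation, which is immediate from comparing the two definitions.
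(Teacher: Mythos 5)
Your proposal is correct and takes essentially the same route as the paper, which gives no independent proof of Lemma~\ref{lem:fp} but simply records it as a simplified form of the covering-relation fixed point theorem of Zgliczy\'nski--Gidea (\cite[Theorems 4, 16]{zg}), noting that the underlying argument is one of homotopy and degree theory. Your degree-theoretic sketch matches the spirit of that citation, and your final observation — that one should just verify a Markovian intersection is an instance of their covering relation and then quote \cite{zg} — is exactly the paper's stance.
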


\subsection{Foliations}

In this paragraph, we recall an elementary result on foliations.

\begin{definition}[Transverse intersection]
Given two foliations $\mathcal{F}, \mathcal{F}'$ such that $\text{dim}(\mathcal{F}) + \text{dim}(\mathcal{F}') = n$, we will say that two leaves $L$ of $\mathcal{F}$ and $L'$ of $\mathcal{F}'$ \emph{intersect transversally}, and denote by $L \pitchfork L'$, if either their intersection is empty, or there exists an open set $\ell$ of the leaf $L$ such that $\text{Card}(\ell \cap L') = 1$. We will say that the two foliations $\mathcal{F}$ and $\mathcal{F}'$ \emph{intersect transversally} it they have two leaves that intersect transversally.
\end{definition}

\noindent
In the following, we will use this definition with $\text{dim}(\mathcal{F}) = \text{codim}(\mathcal{F}') = 1$.

\begin{proposition}
\label{prop:fol}
Assume $f$ is $m$-nice and consider two cubes $C_i,C_j$ of the subdivision $\Df_m$ such that $f(C_i) \cap C_j \neq \emptyset$. We consider a smooth foliation $\Ff$ of $C_i$ and a smooth foliation $\Ff'$ of $C_j$ such that $\text{dim}(\Ff) + \text{dim}(\Ff') = n$. If $f(\Ff)$ does not intersect transversally $\Ff'$ on $\overset{\circ}{C_j}$, then there exists a $\Cf^0$ perturbation of $f$, as small as desired, such that this intersection is transverse. 
\end{proposition}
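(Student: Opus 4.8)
The plan is to reduce the statement to a purely local question in a pair of foliation charts and then to manufacture the transverse intersection by re-defining $f$ on one small ball, by means of the local modification lemma.

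Since $f$ is $m$-nice, $U:=\mathrm{int}\,(f(C_i)\cap C_j)$ is nonempty; fix $p\in U$ — so $p$ lies in the interior of $f(C_i)$ and of $C_j$ — and set $q:=f^{-1}(p)\in\overset{\circ}{C_i}$. As $\mathcal F$ and $\mathcal F'$ are smooth, choose foliation charts: $\psi\colon W\to\R^n$ on a small neighbourhood $W\ni q$ with $\overline W\subset\overset{\circ}{C_i}$, straightening $\mathcal F$ so that its leaves become the lines parallel to $e_1$ (recall $\text{dim}\,\mathcal F=1$), and $\eta\colon V\to\R^n$ on a small neighbourhood $V\ni p$ with $\overline V\subset U$, straightening $\mathcal F'$ so that its leaves become the hyperplanes $\{x_1=\mathrm{const}\}$ (recall $\text{codim}\,\mathcal F'=1$). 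Shrinking $W$ we may assume $f(\overline W)\subset V$. Let $\gamma$ be the leaf of $\mathcal F$ through $q$; in the chart $\psi$, $\gamma\cap W$ is a straight segment through $\psi(q)$.

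Now pick round balls (round in the chart $\psi$) $B_1,B_2$ centred at $q$ with $\overline{B_1}\subset B_2$ and $\overline{B_2}\subset W$, and a round ball $B^{*}$ (round in the chart $\eta$) centred at $p$ with $\overline{B^{*}}\subset f(B_2)$. The spheres $\partial B_1$, $\partial B_2$, $\partial B^{*}$, and the image $f(\partial B_2)$, are bicollared, being round spheres in a chart or the image of such under a homeomorphism. By the local modification lemma (Lemma \ref{lem:modif}, applied in the charts) there is a homeomorphism $\overline{B_2}\to\overline{f(B_2)}$ agreeing with $f$ on $\partial B_2$ and agreeing with an \emph{arbitrarily prescribed} homeomorphism $\beta_1\colon \overline{B_1}\to\overline{B^{*}}$ on $\overline{B_1}$ (of the same orientation type as $f$): the only region left to fill is the annulus $B_2\setminus\overline{B_1}$, mapped onto $f(B_2)\setminus\overline{B^{*}}$, between bicollared spheres. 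Extending this homeomorphism by $f$ outside $B_2$ gives $g\in\Ho$ with $g=f$ off $B_2\subset\overset{\circ}{C_i}$ and $g|_{B_1}=\beta_1$. Thus $\mathcal F$ and $\mathcal F'$ are untouched; $d(g,f)\le\text{diam}\,f(B_2)$ and $d(g^{-1},f^{-1})\le\text{diam}\,B_2$ are as small as we wish once $B_2$ is small; and $g(C_i)\supset g(B_1)=B^{*}$ still meets $C_j$. We take $\beta_1$ to be, in the two charts, a homothety carrying $B_1$ onto $B^{*}$ (composed with a reflection if needed to match orientation); then $g(\gamma)\cap B^{*}=\beta_1(\gamma\cap B_1)$ is, read in the chart $\eta$, a segment along the $x_1$-axis, so it meets the leaf $\{x_1=0\}$ of $\mathcal F'$, inside $B^{*}$, only at $p\in\overset{\circ}{C_j}$. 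Hence a small open subarc of the leaf $g(\gamma)$ at $p$ meets that leaf of $\mathcal F'$ in the single point $p$, and $g(\mathcal F)$ intersects $\mathcal F'$ transversally on $\overset{\circ}{C_j}$. In the conservative case one uses the measure-preserving form of Lemma \ref{lem:modif}: since $f$ preserves $\mu$ one has $\mu(B_2)=\mu(f(B_2))$, so after shrinking $B_1$ it is enough to choose $B^{*}$ with $\mu(B^{*})=\mu(B_1)$ — which forces the annular measures to agree — and to take $\beta_1$ $\mu$-preserving, possible because the arc $\gamma\cap B_1$ one wishes to control is $\mu$-null.

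The step I expect to be the real difficulty is precisely this use of Lemma \ref{lem:modif}. One cannot simply ``push'' the image $f(\gamma)$ slightly to make it cross $\mathcal F'$ transversally: in the topological category $f(\gamma)$ may be wildly embedded and no $\mathcal C^0$-small homeomorphism straightens it, and likewise one cannot correct $f$ on a ball around $p$ in the target, since $f(B_1)$ may be a wildly embedded cell. What makes the argument go through is to perform the modification on the \emph{source} side, on a ball $B_1$ where the leaf $\gamma$ is a tame straight segment, and to discard $f|_{B_1}$ altogether in favour of a tame homeomorphism: the only thing one then has to re-glue is an annulus between bicollared spheres, and providing such a gluing is exactly what Lemma \ref{lem:modif} — and behind it the annulus theorem — does for us. The remainder is bookkeeping: keeping all the balls and their $f$-images small enough that the relevant spheres nest correctly and stay bicollared, that $\overline{B^{*}}\subset f(B_2)$, and that the measures can be matched in the conservative case.
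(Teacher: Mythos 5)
Your proof follows the same strategy as the paper's: pick a point $p$ in the interior of $f(C_i)\cap C_j$, pull back to $q=f^{-1}(p)$, and invoke the local modification lemma (Lemma~\ref{lem:modif}) to cut $f$ out of a small ball around $q$ and glue in an explicit tame map whose image of the $\mathcal F$-leaf crosses the $\mathcal F'$-leaf once, the annulus theorem supplying the gluing between the nested bicollared spheres; your version is more explicit about the nesting and bicollar bookkeeping, which the paper leaves implicit. The one place your argument is looser than the paper's is the conservative case: the paper takes the model map to be affine with orthogonal linear part, so it preserves Lebesgue measure in the $\phi$-chart (hence $\mu$) without further ado, whereas your homothety is not $\mu$-preserving and you only sketch (``possible because the arc is $\mu$-null'') the correction to a $\mu$-preserving $\beta_1$ -- this can be fixed, e.g.\ by taking $\beta_1$ to be the affine orthogonal map as in the paper, but as written it is the weakest step.
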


\begin{proof}
We assume that there does not exist any transverse intersection of leaves in $\Uf = f(\overset{\circ}{C}_i) \cap \overset{\circ}{C}_j$. Consider a point $y \in \Uf$ and the leaf $L'$ of $\Ff'$ passing through $y$. We denote by $(f_1,\dots,f_k)$ an orthonormal basis of $T_y L'$ and we complete it into an orthonormal basis $(f_1,\dots,f_n)$ of $\R^n$. Now, we consider the point $x = f^{-1}(y)$, the leaf $L$ of $\Ff$ passing through $x$ and an orthonormal basis $(e_1,\dots,e_{n-k})$ of $T_x L$ which we complete in a basis $(e_1,\dots,e_n)$ of $\R^n$. We denote by $A \in O(n)$ the linear orthogonal application such that $Ae_i = f_{k+i}, i \in \{ 1\dots ,n \}$, where the index is taken modulo $n$. Then, one can apply Lemma \ref{lem:modif} to replace locally around $y$ the homeomorpihsm $f$ by the affine volume-preserving transformation taking value $x$ on $y$ and of linear part $A$. This gives a homeomorphism of $\Ho$ as close as wanted to $f$, for which the intersection is transverse.
\end{proof}

\section{Proof of main theorem}

\noindent
In this section, we prove Theorem \ref{th:shadowing}. Let us define:
\[ A_{\varepsilon} = \big\{ f \in \Ho \mid f \text{ is } m\text{-chained for some } \mathcal{D}_m \text{ with } \chi(\mathcal{D}_m) < \varepsilon \big\} \]
The proof of the theorem immediately follows from these two lemmas:

\begin{lemma}
\label{lem:od}
For any $\varepsilon>0$, the set $A_{\varepsilon}$ is open and dense in $\Ho$.
\end{lemma}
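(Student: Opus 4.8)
I would handle the two assertions separately; openness is soft, while density is the heart of the matter.

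\emph{Openness.} Suppose $f\in A_\varepsilon$, and let a dyadic subdivision $\Df_m=(C_i)_i$ with $\chi(\Df_m)<\varepsilon$, together with rectangles $c_i\subset C_i$, witness that $f$ is $m$-chained. There are only finitely many pairs $(i,j)$. If $f(C_i)\cap C_j=\emptyset$, the two sets are disjoint and compact, hence at positive distance, so $g(C_i)\cap C_j=\emptyset$ for every $g$ with $d(f,g)$ small enough. If $f(C_i)\cap C_j\neq\emptyset$, then $f(c_i)\cap c_j$ is Markovian, and by Proposition~\ref{prop:mark} it stays Markovian — in particular nonempty — for every $g$ in a suitable $\Cf^0$-neighbourhood of $f$. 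Intersecting these finitely many neighbourhoods gives a $\Cf^0$-neighbourhood of $f$ all of whose members are $m$-chained for the \emph{same} $\Df_m$ with the \emph{same} rectangles $c_i$; thus it is contained in $A_\varepsilon$, and $A_\varepsilon$ is open.

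\emph{Density.} Fix $f\in\Ho$ and $\eta>0$; I must produce $g\in A_\varepsilon$ with $\delta(f,g)<\eta$. Choose $m$ with $\chi(\Df_m)<\varepsilon$ (enlarging it later if needed), and proceed by finitely many successive, arbitrarily small perturbations. \emph{First,} I would make $f$ \emph{$m$-nice}: for every pair with $f(C_i)\cap C_j\neq\emptyset$, arrange $\overset{\circ}{f(C_i)}\cap\overset{\circ}{C_j}\neq\emptyset$, i.e.\ the intersection has nonempty interior. This is the easy "transversality" step announced in the reading guide: the only obstruction is that tiles of the two subdivisions $f(\Df_m)$ and $\Df_m$ touch merely along their $(n-1)$-dimensional boundaries, and a generic small displacement — obtained by applying Lemma~\ref{lem:ext} to a sufficiently fine finite net of points of $M\setminus\partial M$ and extending it — removes such incidental contacts at the cost of an arbitrarily small perturbation. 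Both this property and the set of \emph{active} pairs $\{(i,j):f(C_i)\cap C_j\neq\emptyset\}$ are then stable under any further small perturbation, since disjoint compact sets stay disjoint and a nonempty interior intersection persists (it contains $g$ of a point lying well inside $C_i$ whose $f$-image lies well inside $C_j$).

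\emph{Then,} I would manufacture the Markovian intersections. For each active pair $(i,j)$, pick $q_{ij}$ in the nonempty open set $\overset{\circ}{C_i}\cap f^{-1}(\overset{\circ}{C_j})$ and set $p_{ij}=f(q_{ij})\in\overset{\circ}{C_j}$. For fixed $i$, the finitely many points $q_{ij}$ (over active $j$) sit in the connected open set $\overset{\circ}{C_i}$, so I can choose a thin rectangle $c_i\subset\overset{\circ}{C_i}$ threaded through all of them, oriented so that its long ("horizontal") direction provides, near each $q_{ij}$, a horizontal subrectangle $H_{ij}\ni q_{ij}$; dually, choose a thin rectangle $c_j\subset\overset{\circ}{C_j}$ threaded through the points $p_{ij}$ (over active $i$). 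Now fix an active pair $(i,j)$, take a $1$-dimensional smooth foliation $\Ff$ of $C_i$ whose leaves run along the long direction of $c_i$ and an $(n-1)$-dimensional smooth foliation $\Ff'$ of $C_j$ whose leaves cut $c_j$ into "horizontal slices", and apply Proposition~\ref{prop:fol}: an arbitrarily small perturbation — localized near the crossing point $p_{ij}$, being the one produced by Lemma~\ref{lem:modif} in the proof of that proposition — makes $f(\Ff)$ meet $\Ff'$ transversally, and thickening this transverse crossing furnishes exactly a chart $\phi$ and the slab $H_{ij}$ exhibiting $f(c_i)\cap c_j$ as a Markovian intersection in the sense of Definition~\ref{def:markov}. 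One checks that the contact regions $\overset{\circ}{f(C_i)}\cap\overset{\circ}{C_j}$ are pairwise disjoint (distinct dyadic cubes, and likewise their $f$-images, have disjoint interiors), so the perturbations carried out for the various active pairs have pairwise disjoint supports; they commute, compose to one perturbation whose $\delta$-size is the maximum of the individually tiny ones, hence $<\eta$ after refining $m$ if necessary, and they preserve the set of active pairs. The resulting $g$ is thus $m$-chained with $\chi(\Df_m)<\varepsilon$, i.e.\ $g\in A_\varepsilon$.

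\emph{Main obstacle.} I expect the crux to be the global coherence just described: Definition~\ref{def:c} demands a \emph{single} rectangle $c_i$ per cube having a Markovian intersection with $c_j$ for \emph{every} $C_j$ met by $f(C_i)$, and each $c_j$ must simultaneously serve all sources — this is why one cannot work pair by pair with tiny disjoint rectangles, and why the $c_i$ and $c_j$ must be fixed beforehand as thin "tubes" visiting all the relevant contact (pre)images, the Markovian crossings then being created one pair at a time on disjoint supports. The other delicate point, precisely what Proposition~\ref{prop:fol} absorbs, is the passage from a bare transverse crossing of the two foliations to the full data — chart, horizontal subrectangle, both horizontal faces escaping through opposite $\pi_1$-ends — required by Definition~\ref{def:markov}; the remainder is bookkeeping with finitely many arbitrarily small, disjointly supported perturbations.
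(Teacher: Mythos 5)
Your proof of openness is correct and matches the paper. Your proof of density follows the same broad strategy as the paper (first enforce non-empty interior intersections via Lemma~\ref{lem:ext}, then use the foliation/transversality perturbation of Proposition~\ref{prop:fol}), but it diverges at the key step and I believe there is a genuine gap there.

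The gap is in how you produce the rectangles $c_i$ of Definition~\ref{def:c}. That definition demands a \emph{single} rectangle $c_i\subset C_i$ that serves simultaneously as $R_1$ in every Markovian intersection $g(c_i)\cap c_j$ and as $R_2$ in every $g(c_k)\cap c_i$, and these two roles want \emph{different geometry}. As $R_1$ you want something like a thin tube around a one-dimensional path (so that $g(c_i)$ pierces the target cleanly and the slab $H$ and its faces $H^\pm$ make sense), whereas as $R_2$ you want something like a thin slab around a codimension-one hypersurface (so that $g(H)$ can cross it transversally and exit through the $\pi_1=\pm1$ faces without leaking out in the remaining $n-1$ directions, as required by the third condition of Definition~\ref{def:markov}). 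Your construction assigns a single thin tube $c_i$ threaded through the finitely many source and target points; there is no reason a tube in the role of $R_2$ should satisfy the containment condition $\phi(H)\subset\{\pi_1<-1\}\cup[-1,1]^n\cup\{\pi_1>1\}$ for a chart $\phi$ whose face structure is also compatible with the iteration in Proposition~\ref{prop:mark2}, and indeed with the foliation $\Ff'$ you chose (leaves cutting $c_j$ into horizontal slices) the transverse crossing $g(L_{ij})\pitchfork L'_{ij}$ makes $g(c_i)$ run \emph{along} the long direction of $c_j$, which is the wrong escaping direction.

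The paper resolves exactly this tension by a device you do not use: it builds, for each $i$, a tubular neighbourhood $\Gamma_i$ of a path $\gamma_i$ (the ``domain'' shape) and a tubular neighbourhood $\Sigma_i$ of a codimension-one submanifold $\sigma_i$ (the ``codomain'' shape), together with homeomorphisms $\phi_i,\phi'_i$ supported in $C_i$ sending the \emph{same} small cube $c_i$ to $\Gamma_i$ and to $\Sigma_i$ respectively, and then sets
\[ g=\Bigl(\prod_j(\phi'_j)^{-1}\Bigr)\,f\,\Bigl(\prod_i\phi_i\Bigr). \]
With this choice $g(c_i)\cap c_j=(\phi'_j)^{-1}\bigl(f(\Gamma_i)\cap\Sigma_j\bigr)$, so the cube $c_i$ ``looks like'' a tube when pushed forward by $g$ and ``looks like'' a slab when serving as a target; the smallness of $g-f$ is guaranteed because each $\phi_i,\phi'_j$ has support in a single cube of $\Df_m$, whose diameter is controlled by $\chi(\Df_m)<\min(\kappa,\omega(\kappa))$. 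This pre- and post-composition is the crux of the paper's argument, and it is the piece your proof is missing; without it, the final assertion that ``thickening this transverse crossing furnishes a chart and a slab exhibiting $f(c_i)\cap c_j$ as Markovian'' does not go through for a single thin tube per cube.
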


\begin{lemma}
\label{lem:sp}
If $f \in \cap_{p \in \N} A_{1/p}$, then $f$ satisfies the special shadowing property. 
\end{lemma}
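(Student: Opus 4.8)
The plan is to prove Lemma~\ref{lem:sp} directly from the definition of $m$-chained homeomorphisms, splitting the argument into the shadowing part and the periodic shadowing part. Throughout, fix $f \in \bigcap_{p \in \N} A_{1/p}$.

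\medskip

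\textbf{Setup.} Given $\varepsilon > 0$, choose $p$ large enough that $1/p < \varepsilon/2$ (and also so that the cube diameters control $\varepsilon$ through the foliation/rectangle data); by hypothesis $f$ is $m$-chained for some dyadic subdivision $\mathcal{D}_m = (C_i)$ with $\chi(\mathcal{D}_m) < \varepsilon/2$, with associated subrectangles $c_i \subset C_i$. Now set $\delta$ to be the Lebesgue number of the cover $\{\overset{\circ}{C_i} \text{ (thickened)}\}$ — more precisely, $\delta$ small enough that whenever $\mathrm{dist}(f(x_k), x_{k+1}) < \delta$ and $x_k \in C_{i_k}$, one can arrange $f(C_{i_k}) \cap C_{i_{k+1}} \neq \emptyset$. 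The key point is that a $\delta$-pseudo orbit determines an \emph{itinerary} $(i_k)_{k\in\Z}$ of cube indices with $f(C_{i_k}) \cap C_{i_{k+1}} \neq \emptyset$ for all $k$, hence by the $m$-chained property $f(c_{i_k}) \cap c_{i_{k+1}}$ is a nonempty Markovian intersection for every $k$.

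\medskip

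\textbf{Shadowing.} The mechanism is a standard ``intersection of nested preimages'' argument powered by Propositions~\ref{prop:mark} and \ref{prop:mark2}. For each $N$, iterating Proposition~\ref{prop:mark2} along the finite chain $c_{i_{-N}}, \dots, c_{i_N}$ shows $f^{2N}(c_{i_{-N}}) \cap c_{i_N}$ is Markovian, hence nonempty; pulling back, the set $K_N := \bigcap_{k=-N}^{N} f^{-k}(c_{i_k})$ is nonempty. (One should check this explicitly: a Markovian intersection $f(c_{i_k})\cap c_{i_{k+1}}\neq\emptyset$ combined with the ``full horizontal crossing'' forces the relevant preimage pieces to be nonempty compact sets, and these nest as $N$ grows.) The $K_N$ are nested nonempty compact sets, so $\bigcap_N K_N \neq \emptyset$; any $x^* $ in this intersection satisfies $f^k(x^*) \in c_{i_k} \subset C_{i_k}$ for all $k$. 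Since $x_k \in C_{i_k}$ too and $\mathrm{diam}(C_{i_k}) < \varepsilon/2$, we get $\mathrm{dist}(f^k(x^*), x_k) < \varepsilon$, which is the shadowing property.

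\medskip

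\textbf{Periodic shadowing.} Here we use Lemma~\ref{lem:fp}. If $(x_k)$ is a $\delta$-periodic pseudo orbit of period $N$, its itinerary $(i_k)$ is $N$-periodic, so $i_{k+N} = i_k$. Applying Proposition~\ref{prop:mark2} $N-1$ times along $c_{i_0}, c_{i_1}, \dots, c_{i_{N-1}}, c_{i_0}$ shows $f^N(c_{i_0}) \cap c_{i_0}$ is Markovian; by Lemma~\ref{lem:fp} (applied to the homeomorphism $f^N$ and the rectangle $c_{i_0}$), there is a fixed point $x^*$ of $f^N$ in $c_{i_0}$, i.e.\ a periodic point of period dividing $N$. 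One then checks $f^k(x^*) \in c_{i_k}$ for each $k$ (this follows from the construction of the Markovian chain: the fixed point, being in $c_{i_0}$ and tracked forward, lands in each successive $c_{i_k}$), giving $\mathrm{dist}(f^k(x^*), x_k) < \varepsilon$. A minor point is to ensure the period is \emph{exactly} $N$ as required by the definition — if $x^*$ has smaller period one may need to argue the itinerary still forces period $N$, or note that the definition of periodic shadowing is standardly read as ``period dividing $N$''; I would address this with a short remark. \emph{The main obstacle} is the bookkeeping in the shadowing part: verifying that Markovian intersections of the subrectangles genuinely produce nonempty nested preimage sets $f^{-k}(c_{i_k})$, rather than just nonempty images — this requires unpacking the ``horizontal subrectangle crossing vertically'' structure in Definition~\ref{def:markov} and confirming it is preserved under the finite compositions licensed by Proposition~\ref{prop:mark2}, so that a compactness argument closes the loop.
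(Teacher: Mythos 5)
Your proposal is correct and follows essentially the same route as the paper: pick the subdivision from the $m$-chained hypothesis, take $\delta$ smaller than the minimum distance between disjoint pairs $f(C_i)$ and $C_j$, use Proposition~\ref{prop:mark2} to chain Markovian intersections of the $c_{i_k}$ along the pseudo orbit's itinerary, extract the shadowing point by compactness, and invoke Lemma~\ref{lem:fp} for the periodic case. The subtlety you flag — that the chained Markovian intersections must yield a point passing through \emph{every} intermediate $c_{i_k}$, not just the endpoints — is indeed passed over lightly in the paper as well; it is supplied by the covering-relations machinery of Zgliczynski--Gidea \cite{zg}, on which Propositions~\ref{prop:mark}, \ref{prop:mark2} and Lemma~\ref{lem:fp} are based, so raising it is fair but it does not change the argument.
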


\begin{proof}[Proof of Lemma \ref{lem:od}] 
The fact that $A_\varepsilon$ is open easily follows from Proposition \ref{prop:mark}. Therefore, we only have to prove that this set is dense in $\Ho$.

We fix $\varepsilon > 0$, $f \in \Ho$ and $\kappa > 0$. We want to show that there exists $g \in A_{\varepsilon}$ such that $d(f,g) < \kappa$. We consider a dyadic subdivision $\mathcal{D}_m$ such that $\chi(\mathcal{D}_m) < \min(\varepsilon, \kappa, \omega(\kappa))$, where $\omega(\kappa)$ denotes the modulus of uniform continuity of $f$. Our goal is to create Markovian intersections between each pair of cubes $C_i$ and $C_j$ such that $f(C_i) \cap C_j \neq \emptyset$.\\

Firstly, we prove that making a small perturbation of $f$ if necessary, each time $f(C_i) \cap C_j \neq \emptyset$, the intersection has nonempty interior.

Assume that $f(C_i) \cap C_j \neq \emptyset$ but $f(\overset{\circ}{C_i}) \cap \overset{\circ}{C_j} = \emptyset$. This means that $f(\partial C_i) \cap \partial C_j \neq \emptyset$; consider $f(x)=y$ in this intersection. Now, by the extension of finite maps (Lemma \ref{lem:ext}), one can find a homeomorphism $\varphi \in \Ho$ with support\footnote{The support of a homeomorphism $\psi$ is defined as the closure of the largest set $K$ such that $\psi|_K \neq Id$.} in a neighbourhood of $y$, such that $\tilde{f} = \varphi \circ f$ is as close to $f$ as wanted, and $\tilde{f}(x) \in \overset{\circ}{C_j}$. Therefore $\tilde{f}(\overset{\circ}{C_i}) \cap \overset{\circ}{C_j} \neq \emptyset$ (see Figure \ref{fig:grille}).\\

\begin{figure}[h]
\begin{center}

\includegraphics[scale=.6]{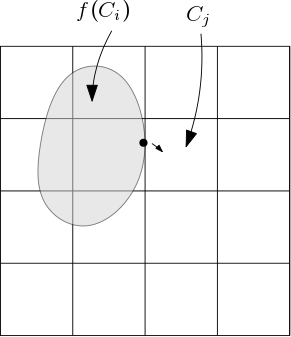} 
\caption{Local modification of $f$, seen in the cube $I^2$}\label{fig:grille}

\end{center}
\end{figure}

Now, on each cube $C_i\in\Df_m$, we consider the foliation $\Ff$ by vertical lines and the foliation $\Hf$ by horizontal hyperplanes. For each intersection $f(C_i)\cap C_j\neq\emptyset$, we look at both foliations $f(\Ff \cap C_i)$ and $\Hf \cap C_j$. By Proposition \ref{prop:fol}, we can always assume that, up to a small perturbation of $f$, this intersection is transverse. We denote by $L_i^j$ and ${L'}_i^j$ the leaves of $\Ff \cap C_i$ and $\Hf \cap C_j$, such that $f(L_i^j)$ intersects transversally ${L'}_i^j$. 

On each cube $C_i$, we consider a smooth path $\gamma_i$ such that for every transverse intersection $L_i^j \pitchfork f^{-1}({L'}_i^j)\neq\emptyset$, the path $\gamma_i$ coincides with the leaf $L_i^j$ on a small neighbourhood of $L_i^j \pitchfork f^{-1}({L'}_i^j)$. Looking at the cubes $C_k$ such that $f(C_k) \cap C_i \neq \emptyset$, we also consider a smooth codimension 1 submanifold $\sigma_i$ such that for every transverse intersection $f(L_k^i) \pitchfork {L'}_k^i\neq\emptyset$, $\sigma_i$ coincides with the leaf ${L'}_k^i$ on a small neighbourhood of the transverse intersection $f(L_k^i) \pitchfork {L'}_k^i$. Remark that by construction, for every $i$ and $j$, we have the transverse intersection $f(\gamma_i) \pitchfork \sigma_j$.

Then, we consider a $\delta$ tubular neighbourhood $\Gamma_i$ of the path $\gamma_i$ and a $\delta'$ tubular neighbourhood $\Sigma_i$ of the submanifold $\sigma_i$, as well as two (conservative) homeomorphisms $\phi_i$ and $\phi_i'$ of the cube $C_i$, such that (see Figures \ref{fig:markov1} and \ref{fig:markov2}):
\begin{itemize}
\item $\Gamma_i$ and $\Sigma_i$ have the same volume,
\item $\phi_i$ and $\phi_i'$ have support in $C_i$,
\item if we denote $c_i$ the cube with same centre as $C_i$ and same volume as $\Gamma_i$, we have $\phi_i(c_i) = \Gamma_i$ and $\phi'_i(c_i) = \Sigma_i$;
\item the image of the vertical (resp. horizontal) faces of the cube $c_i$ by $\phi_i$ (resp. $\phi'_i$) is contained in a small neighbourhood of the boundary of $\gamma_i$ (resp. $\sigma_i$).
\end{itemize}

\begin{figure}[h]
\begin{center}

\includegraphics[scale=0.8]{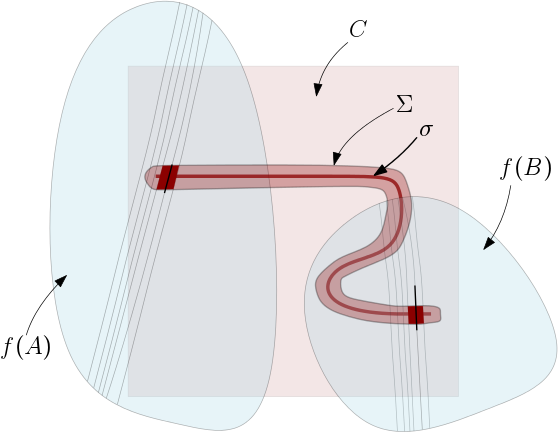} 
\caption{Cubes $A$ and $B$ have their image intersecting the cube $C$. The vertical lines represent the image by $f$ of the vertical foliation of cubes $A$ and $B$. $\Sigma$ is transverse to the image of the foliation by $f$ in the dark red areas.}\label{fig:markov1}

\end{center}
\end{figure}

\begin{figure}[h]
\begin{center}

\includegraphics[scale=0.6]{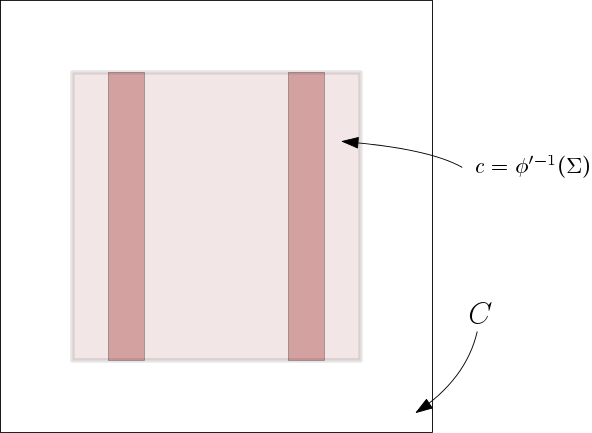} 
\caption{The application $\phi'$ unfolds the cube $c$ in the manifold $\Sigma$. The previous dark red areas in Figure \ref{fig:markov1} are the image by $\phi'$ of the dark red areas above.}\label{fig:markov2}

\end{center}
\end{figure}

As $\gamma_i$ and $\sigma_i$ are smooth, for $\delta$ (and thus $\delta'$) small enough, for any $i,j$ such that $f(C_i)\cap C_j\neq\emptyset$, the rectangles $f(\phi_i(c_i))$ and $\phi'_j(c_j)$ have a Markovian intersection. Then, we set:
\[g = \big(\prod_j (\phi'_j)^{-1} \big) f \big(\prod_i \phi_i\big),\]
By the inequality $\chi(\mathcal{D}_m) < \min(\varepsilon, \kappa, \omega(\kappa))$, and since the $\phi_i$ and $\phi'_i$ have their support included in a single cube of the subdivision, one gets a homeomorphism which is $\kappa$-close to $f$ and which belongs to $A_{\varepsilon}$.

\end{proof}

\begin{proof}[Proof of Lemma \ref{lem:sp}]
We now prove that if $f \in \cap_{p \in \N} A_{1/p}$, then $f$ satisfies the special shadowing property. 

Let us consider $f \in A_{\varepsilon}$. This gives us a subdivision $\Df_m$ such that $f$ is $m$-chained for $\Df_m$ and $\chi(\Df_m) < \varepsilon$. We set:
\[\delta < \min_{f(C_i) \cap C_j = \emptyset} \text{dist } (f(C_i),C_j).\]
Note that the set of index on which the minimum is taken may be empty: in that case, any $\delta > 0$ works.

%The initialization for $n=0$ is trivial. Assume the property is true at rank $n \geq 0$ and consider any $n+1$-chain $C_0 \rightarrow \dots \rightarrow C_n \rightarrow C_{n+1}$. By hypothesis, there exists a strict horizontal subrectangle $R \subset c_0$ such that $f(R) \subset \overset{\circ}{C}_1, \dots, f^n(R) \subset \overset{\circ}{C}_n$ and $f^n(R)$ is a strict vertical sub-rectangle of $c_n$. By construction, since $f(C_n) \cap C_{n+1} \neq \emptyset$, there exists a strict horizontal subrectangle $H$ of $c_n$ such that $f(H)$ is a strict vertical subrectangle of $c_{n+1}$. Since $f^n(R)$ is a strict vertical subrectangle of $c_n$, we know that $f^n(R) \cap H$ contains at least one connected component $\tilde{R}$ which is a strict vertical subrectangle of $H$ and a strict horizontal subrectangle of $f^n(R)$. But therefore, $f(\tilde{R})$ is a strict vertical subrectangle of $c_{n+1}$ and $\tilde{R}$ is a strict horizontal subrectangle of $R$, and subsequently of $c_0$. Moreover, since $\tilde{R} \subset R$, it also shadows the chain.\\

%Given an $n$-chain $C_0 \rightarrow \dots \rightarrow C_n = C_0$, we know that there exists a rectangle $R \subset c_0$ such $R$ shadows the chain and $f^n(c_0) \cap R$ has a connected component which is a strict vertical subrectangle of $R$. Therefore,  By Lemma \ref{lem:fp} applied with $f^n$, we know that there exists a fixed point for $f^n$ in $R$, namely a periodic point of period $n$ for $f$.

Now, consider any $\delta$-pseudo orbit $(y_k)_{k \in \Z}$ of $f$. As the cubes $C_i$ form a partition of $M$, one can chose a sequence $(i_k)_{k \in \Z}$ of indices such that $y_k \in C_{i_k}$ for any $k\in\Z$. Therefore, $f(C_{i_k}) \cap C_{i_{k+1}} \neq \emptyset$, otherwise we would have $d(f(C_{i_{k}}),C_{i_{k+1}}) > \delta$, which is impossible because $(y_k)_{k \in \Z}$ is a $\delta$-pseudo orbit.

Recall that by definition of $A_{\varepsilon}$, we have associated a small rectangle $c_i$ to any cube $C_i$ of $\mathcal D_m$, such that for any $i,j$ such that $f(C_i)\cap C_j\neq\emptyset$, the intersection $f(c_i) \cap c_j$ is nonempty and Markovian. Then, by an easy recurrence on $n$, using Lemma \ref{prop:mark2}, we obtain that for any $n$, the intersection $f^n(c_{i_{-n}}) \cap f^{-n}c_{i_n}$ is nonempty and Markovian. Let $x_n$ be a point of this intersection. Then, for any $k\in\{-n,\dots,n\}$, as $f^{k}(x_n)\in c_{i_k}\subset C_{i_k}\ni y_k$, one has
\[ d(f^{k}(x_n), y_k) < \chi(\mathcal{D}_m) < \varepsilon.\]
Therefore, $x_n$ $\varepsilon$-traces the finite $\delta$-pseudo orbit $(y_k)_{-n \leq k \leq n}$. Since $M$ is a compact manifold, a subsequence of $(x_n)_{n > 0}$ converges towards some point $x$, which $\varepsilon$-traces the $\delta$ pseudo-orbit $(y_k)_{k \in \Z}$. 

In the case of a periodic $\delta$-pseudo orbit, the shadowing by the real orbit of a periodic point follows immediately from the previous reasoning and Lemma \ref{lem:fp} which allows to exhibit a periodic orbit from a periodic chain of cubes.

Eventually, this shows that the $G_{\delta}$ set $\cap_{p \in \N} A_{1/p}$ is contained in the set of homeomorphisms of $\Ho$ satisfying the special shadowing property.

\end{proof}

\section{Remarks}

As a conclusion, we formulate a few remarks:
\begin{itemize}
\item As mentioned briefly in the introduction, the proof actually still holds in the non-conservative case and provides an alternative proof to \cite{piotr}. Indeed, topologically, the only difference is that the image of a cube may be strictly contained in another cube but this fact does not have any consequence on our proof.
\item The non-shadowing property also holds on a dense set in $\Ho$. Indeed, in \cite{pa} (pages 33-34), the density of the maps $f \in \Ho$ which have an iterate equal to the identity on an open set ($f^p = Id$ on an open set $V \subset M$, for some $p > 0$) is proved. This property contradicts immediately the shadowing property.
\item Given a $\varepsilon > 0$, the proof also provides an upper-bound for the $\delta > 0$ that can be chosen. If $\mathcal{D}_m$ is a subdivision of diameter less than $\varepsilon$, then on can take:
\[ \delta < \min_{f(C_i) \cap C_j = \emptyset} \text{dist } (f(C_i), C_j) \]
\item The specification property does not hold generically in the dissipative case. Indeed, it is known that the specification property implies topological mixing (see \cite{dgs}, Proposition 21.3), which, in turn, does not hold on an open set in the dissipative case.
\end{itemize}

\bibliographystyle{amsplain}
%    Insert the bibliography data here.

\end{document}